\numberwithin{equation}{section}
\numberwithin{figure}{section}
\theoremstyle{plain}
\newtheorem{theorem}{Theorem}[section]
\newtheorem{proposition}[theorem]{Proposition}
\newtheorem*{theorem*}{Theorem}
\newtheorem*{lemma*}{Lemma}
\newtheorem{lemma}[theorem]{Lemma}
\newtheorem{remark}[theorem]{Remark}
\newcommand{\Hom}{{\mathrm{Hom}}}
\newcommand{\tra}{{\mathrm{tra}}}
\newcommand{\Ho}{{\mathrm{H}}}
\newcommand{\wH}[1]{\widetilde{H_{#1}}}
\newcommand{\irr}{\mathrm{Irr}}
\begin{document}
\title{On twisted group ring isomorphism problem for \MakeLowercase{p}-groups }
\author{Gurleen Kaur}
\address{GK:  Indian Institute of Science Education and Research Mohali, Knowledge City, Sector 81, Mohali 140 306, India}
\email{gurleen@iisermohali.ac.in}
\author{Surinder Kaur}
\address{SK: Department of Mathematics and Statistics, Indian Institute of Technology Kanpur, Kanpur 208016, India}
\email{skaur.math@gmail.com}
\author{Pooja Singla}
\address{PS: Department of Mathematics and Statistics, Indian Institute of Technology Kanpur, Kanpur 208016, India}
\email{psingla@iitk.ac.in}
%\dedicatory{}
\date{}
\keywords{twisted group algebras, projective representations, Schur multiplier} 
\subjclass[2010]{Primary 16S35;  Secondry 20C25, 20E99}
{\maketitle}
\noindent
\begin{abstract}
  In this article, we explore the problem of determining isomorphisms between the twisted complex group algebras of finite $p$-groups. This problem bears similarity to the classical group algebra isomorphism problem and has been recently examined by Margolis-Schnabel. Our focus lies on a specific invariant, referred to as the generalized corank, which relates to the twisted complex group algebra isomorphism problem. We provide a solution for non-abelian $p$-groups with generalized corank at most three.
\end{abstract}

\section{Introduction}
The group ring $RG$, where $G$ is a finite group and $R$ is a commutative ring, holds significant importance in representation theory. Over the past few decades, there has been considerable interest in decoding information about a group $G$ from its group ring $RG$. One particularly challenging problem in this context is the isomorphism problem, which investigates whether a group ring uniquely determines its corresponding group. Specifically, if $RG$ and $RH$ are isomorphic as $R$-rings, does it imply that the groups $G$ and $H$ are isomorphic as well? For the current status of this problem, one can refer to \cite{MR142622, mr602901, MR4472590,margolis2018finite, sandling2006isomorphism}.
The solution to this problem depends mainly upon the ring under consideration. For example, all the finite abelian groups of a given order have isomorphic complex group algebras, whereas the rational group algebras of any two non-isomorphic abelian groups are always non-isomorphic (see \cite{MR34758}). In 1971, Dade~\cite{MR280610} constructed an example demonstrating the existence of two non-isomorphic metabelian groups that possess isomorphic group algebras over any field. Subsequently, Hertweck ~\cite{MR1847590} presented a counterexample to this phenomenon specifically for integral group rings, showcasing two non-isomorphic groups of even order whose integral group rings are isomorphic. However, the problem of determining whether integral group rings of groups with odd order exhibit isomorphism remains an open question. Additionally, investigating this problem in the context of modular group rings, in particular, for the group rings of finite $p$-groups over a field of characteristic $p$ has been of significant interest (see \cite{MR1896125}).

In recent times, a variant of the classical isomorphism problem known as the twisted group ring isomorphism problem  has gained considerable attention. The problem was initially introduced in \cite{MS} and has been further explored by the authors in \cite{MR4145799,margolis2020twisted}. In order to explain this version of the isomorphism problem, we start by introducing some notation.

Let $R$ be a commutative ring with unity and $R^\times$ be the unit group of $R$.
Following \cite{MR788161}, we denote the set of $2$-cocyles of $G$ by $ Z^{2}(G,R^{\times})$ and the second cohomology group of $G$ over $R^{\times}$ by $ H^{2}(G,R^{\times})$. For a $2$-cocycle $\alpha,$ let $[\alpha]$ denote its cohomology class. Given a ring $R$, we write $G\sim_{R}H$ if there exists an isomorphism $\psi:H^{2}(G,R^{\times}) \rightarrow H^{2}(H,R^{\times})$ such that $R^{\alpha}G \cong R^{\psi(\alpha)}H$ for every $[\alpha] \in H^{2}(G,R^{\times})$. The twisted group ring isomorphism problem is to determine the equivalence classes of  groups of order $n,$ under the relation $\sim_{R}$. We call these equivalence classes to be the \textbf{$R$-twist isomorphism classes}.

If two groups belong to the same $R$-twist isomorphism class, then their group rings over $R$ are isomorphic. The order of the group $\Ho^2(G, R^\times)$ remains unchanged under the $\sim_R$ relation for the twisted group ring of $G$. Throughout this article, our focus lies on the $\mathbb{C}$-twist isomorphism classes of finite $p$-groups. The structure of the complex group algebras remains invariant under $\mathbb{C}$-twist isomorphism. Also, the group $\Ho^2(G, \mathbb{C}^\times)$ is commonly referred to as the Schur multiplier of $G$.

In \cite[Theorem~4.3]{MS}, Margolis and Schnabel determined the $\mathbb{C}$-twist isomorphism classes of groups of order $p^4$, where $p$ is a prime. In the same article, they proved (see \cite[Lemma~1.2]{MS}) that any equivalence class of a finite abelian group  with respect to $\sim_\mathbb{C}$ is a singleton. Hence it is sufficient to focus on the classification of the $\mathbb C$-twist isomorphism classes of non-abelian finite groups. In this article, we continue this line of investigation of the $\mathbb {C}$-twist isomorphism classes of finite non-abelian $p$-groups by fixing the order of the Schur multiplier. In this direction, Green~\cite{Green} proved that the order of the Schur multiplier of a $p$-group $G$ of order $p^n$ is at most
$ p^{\frac{n(n-1)}{2}}.$
Niroomand~\cite{N2} improved this bound for non-abelian $p$-groups and proved that $|\Ho^2(G, \mathbb C^\times)| \leq p^{\frac{(n-1)(n-2)}{2}+1}$ for any non-abelian group $G$ of order $p^n$. Motivated by this result, a finite non-abelian $p$-group $G$ is said to have {\bf generalized corank} $s(G)$ if $|\Ho^2(G, \mathbb C^\times)| = p^{\frac{(n-1)(n-2)}{2}+1 - s(G)}.$

We study the $\mathbb{C}$-twist isomorphism classes of finite non-abelian $p$-groups by fixing their generalized corank. In particular, we describe the $\mathbb C$-twist isomorphism classes of all $p$-groups with $s(G) \leq 3$.  The classification of all non-isomorphic $p$-groups with $s(G) \leq 3$ is known in the literature by the work of P. Niroomand~\cite{N4} and S. Hatui~\cite{Hatui}. We use this classification along with the structure of the corresponding twisted group algebras to obtain our results. We use the following notation:

\begin{itemize}
  \item $C_{p^{n}}$ denotes the cyclic group of order $p^n$.
  \item $C_{p^{n}}^{m}$ or $C_{p^{n}}^{(m)}$ denote the direct product of $m$-copies of the cyclic group of order $p^n$.
  \item $H_{m}^{1}$ and $H_{m}^{2}$ denote the extraspecial $p$-groups of order $p^{2m+1}$ and of exponent $p$ and $p^2$, respectively.
  \item $H.K$ denotes the central product of the groups $H$ and $K$.
  \item $E(r) = E.C_{p^{r}}$, where $E$ is an extraspecial $p$-group.

\end{itemize}

We now list the main results of this article. Our first result describes the $\mathbb{C}$-twist isomorphism classes of groups with generalized corank zero or one.

\begin{lemma}
  \label{lem:s(G)-0-1} For non-abelian groups $G$ of order $p^{n}$ with $s(G) \in \{ 0, 1\}$, every  $\mathbb{C}$-twist isomorphism class is a singleton, i.e. consists of only one group up to isomorphism.
\end{lemma}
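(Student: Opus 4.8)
The plan is to reduce the statement to the known classification of $p$-groups of small generalized corank. First I would isolate the invariants preserved by $\sim_{\mathbb C}$. If $G \sim_{\mathbb C} H$ then by definition there is an isomorphism $\psi\colon \Ho^2(G,\mathbb C^\times)\to \Ho^2(H,\mathbb C^\times)$, so the two Schur multipliers have the same order. Since a group isomorphism fixes the identity, $\psi$ fixes the trivial cohomology class, whence $\mathbb C G=\mathbb C^{1} G\cong \mathbb C^{\psi(1)}H=\mathbb C H$; comparing dimensions gives $|G|=|H|$, and this isomorphism also forces $H$ to be non-abelian since $\mathbb C G$ is non-commutative. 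Writing $|G|=|H|=p^n$, the equality of Schur multiplier orders then forces $s(G)=s(H)$. Thus the entire $\mathbb C$-twist isomorphism class of $G$ is contained in the set of non-abelian groups of order $p^n$ having the same generalized corank as $G$.

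Next I would invoke the classification of non-abelian $p$-groups $G$ with $s(G)\le 1$ due to Niroomand and Hatui~\cite{N4,Hatui}. For $s(G)\le 1$ the deviation from the Niroomand bound is so small that the non-abelian structure of $G$ is essentially forced to come from an extraspecial group of order $p^3$, the remaining direct factor being elementary abelian and completely determined by $n$. Concretely, once the order $p^n$ is fixed, this classification should leave, for each value $s\in\{0,1\}$, a single group up to isomorphism: for instance $H_1^1\times C_p^{(n-3)}$ in the corank-zero case for odd $p$, and the analogous extraspecial-by-abelian group (built from $H_1^2$, respectively $D_8$ when $p=2$) in the corank-one case. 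Note that $p$ and $n$ are themselves determined by the common order $p^n$, so no ambiguity between the odd and even cases can arise within a single class.

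Combining the two steps finishes the argument: once $(n,s)$ is fixed with $s\in\{0,1\}$, the candidate set produced by the reduction in the first step contains only one group up to isomorphism, so the $\mathbb C$-twist isomorphism class of $G$ consists of $G$ alone. The point that requires care — and which I expect to be the main obstacle — is confirming that the Niroomand--Hatui lists genuinely contain no two non-isomorphic groups sharing both the order $p^n$ and the corank in the range $s\le 1$. Should such a coincidence occur for some $n$, I would separate the two groups by an invariant of the twisted group algebras that $\sim_{\mathbb C}$ must preserve: the coarsest is the Wedderburn decomposition $\mathbb C G\cong\bigoplus_i M_{d_i}(\mathbb C)$ arising from the trivial cocycle, i.e. the multiset of ordinary character degrees, and, failing that, the decomposition $\mathbb C^\alpha G\cong\bigoplus_i M_{d_i(\alpha)}(\mathbb C)$ for a carefully chosen nontrivial class $[\alpha]$, which I would compute from the behaviour of cocycles on the extraspecial-by-abelian structure of each candidate.
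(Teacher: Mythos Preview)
Your strategy is exactly the paper's: reduce to the Niroomand classification of non-abelian $p$-groups with $s(G)\le 1$, then observe that for each fixed order the list contains a single group (the paper phrases this as ``non-isomorphic complex group algebras'', which here amounts to the same thing since the candidates have different orders or different primes).

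The one concrete error is your guess at what the classification actually says in the $s(G)=1$ case. The reference is Niroomand~\cite{N1} (not~\cite{N4,Hatui}, which treat $s(G)=2$ and $s(G)=3$), and for odd $p$ the unique group with $s(G)=1$ is \emph{not} $H_1^2\times C_p^{(n-3)}$ --- that group has $s(G)=2$ and appears in Theorem~\ref{thm:niroo-s(G)=2}(ii) --- but rather $C_p^{(4)}\rtimes C_p$, which has order $p^5$. Thus for odd $p$ there is no group with $s(G)=1$ unless $n=5$, and then it is unique; for $p=2$ the group is $D_8\times C_2^{(n-3)}$ as you say. With the correct list in hand your backup plan (separating candidates via Wedderburn decompositions) is never needed: uniqueness for each $(p,n)$ is immediate.
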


In our next result, we describe all non-singleton $\mathbb{C}$-twist isomorphism classes of finite $p$-groups with $s(G)=2$.

\begin{theorem}\label{thm:s(G)=2}
  For non-abelian groups $G$ of order $p^{n}$ with $s(G)=2$, all non-singleton  $\mathbb{C}$-twist isomorphism classes are as follows:

  \begin{itemize} \item [(1)] for any $n \geq 4$, $\mathbb{Q}_{8} \times C_{2}^{(n-3)} \sim_\mathbb{C} E(2) \times C_{2}^{(n-4)}$; 

    \item [(2)] for an odd prime $p$, $E(2) \sim_\mathbb{C} H_1^2 \times C_p \sim_\mathbb{C} \langle a,b~|~a^{p^2}=1,b^{p}=1, [a,b,a]=[a,b,b]=1 \rangle$;

    \item [(3)] for $n \geq 5$, $E(2) \times C_p^{(n-4)} \sim_\mathbb{C} H_1^2 \times C_p^{(n-3)}$;

    \item [(4)] for $n = 2m+1$ and $m \geq 2,$ $H_m^1 \times C_p^{(n-2m-1)} \sim_\mathbb{C} H_m^2 \times C_p^{(n-2m-1)}$;

    \item [(5)] for $n \geq 6$ and $1 < m \leq n/2-1$, $E(2) \times C_p^{(n-2m-2)}  \sim_\mathbb{C}  H_m^1 \times C_p^{(n-2m-1)} \sim_\mathbb{C} H_m^2 \times C_p^{(n-2m-1)}$.
  \end{itemize}

\end{theorem}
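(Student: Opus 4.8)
The isomorphism type of a twisted complex group algebra is a coarse invariant, and the first thing I would record is that over $\mathbb{C}$ every $\mathbb{C}^\alpha G$ is semisimple, so $\mathbb{C}^\alpha G \cong \bigoplus_i M_{d_i}(\mathbb{C})$ where the $d_i$ are the degrees of the irreducible $\alpha$-projective representations of $G$. Consequently $\mathbb{C}^\alpha G \cong \mathbb{C}^\beta H$ if and only if the two algebras have the same multiset of projective character degrees, and to establish a relation $G \sim_\mathbb{C} H$ it suffices to exhibit an isomorphism $\psi \colon \Ho^2(G,\mathbb{C}^\times) \to \Ho^2(H,\mathbb{C}^\times)$ under which $[\alpha]$ and $\psi([\alpha])$ always carry equal degree multisets. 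Thus the whole theorem reduces to computing, for each group in the list, the degree datum $[\alpha] \mapsto \{d_i\}$ on its Schur multiplier and matching these data across the five families.

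The computational engine I would use attaches to each class $[\alpha]$ the alternating form $b_\alpha(\bar x,\bar y) = \alpha(x,y)\alpha(y,x)^{-1}$ on the abelianization, and reads the degree datum from $b_\alpha$ together with the commutator structure of $G$. For an elementary abelian factor this is exact: $\mathbb{C}^\alpha C_p^{(r)} \cong M_{p^{(r-d)/2}}(\mathbb{C})^{\oplus p^d}$ with $d = \dim \rad b_\alpha$. I would combine this with the Künneth decomposition $\Ho^2(G \times C_p^{(k)},\mathbb{C}^\times) \cong \Ho^2(G,\mathbb{C}^\times) \oplus \Ho^2(C_p^{(k)},\mathbb{C}^\times) \oplus (G^{ab}\otimes C_p^{(k)})$, which splits a class into a pure $G$-part, a pure abelian part, and a mixing part, so that the twisted algebra factors along this decomposition. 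For the extraspecial blocks I would use the classical fact that a cocycle nondegenerate on $G/[G,G]$ produces a single Weil/Heisenberg block of degree $p^m$, while degenerate cocycles descend to extraspecial quotients of smaller rank.

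With these tools I would first settle the order-$p^4$ base cases --- $\mathbb{Q}_{8}$ versus $E(2)$ in (1) and the three groups in (2) --- where the Schur multipliers are small enough to tabulate the degree datum over every cohomology class explicitly and read off $\psi$. The trivial class is handled by the equality of ordinary character degrees (so that $\mathbb{C}G \cong \mathbb{C}H$), and the remaining classes are matched through $b_\alpha$. For the pair $H_m^1$ versus $H_m^2$ in (4) I would observe that both have the same ordinary degrees and, for $m \geq 2$, isomorphic Schur multipliers, and that the degree datum depends only on the symplectic data of $b_\alpha$ on the common quotient $C_p^{2m}$, which is insensitive to the exponent; this also pinpoints why $m=1$ is excluded, since there $H_1^2$ has a strictly smaller multiplier. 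Finally, items (3) and (5), which carry free $C_p$ factors and the parameter $m$, I would reduce to the base and extraspecial cases: the Künneth analysis shows that appending $C_p^{(k)}$ transforms the degree datum identically on the two sides once the cores agree, so the core isomorphism $\psi$ propagates.

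The main obstacle is the uniform treatment of the mixing classes in $G^{ab}\otimes C_p^{(k)}$: these entangle the nonabelian core with the abelian factor, so $\mathbb{C}^\alpha(G\times C_p^{(k)})$ is not merely a tensor product of the factors' twisted algebras, and the degree datum must be computed from the radical of the combined alternating form on all of $(G\times C_p^{(k)})^{ab}$. The genuinely delicate point is to choose a single isomorphism $\psi$ of Schur multipliers that simultaneously matches the pure, abelian, and mixed classes, rather than matching them by mutually incompatible bijections; this is precisely where the boundary phenomena must be reconciled --- the exclusion of $m=1$ in (4) and (5), the shift by one $C_p$ factor between the two sides of (1) and (3), and the special role of $\mathbb{Q}_{8}$ at the prime $2$.
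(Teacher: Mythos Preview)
Your route via alternating forms and the K\"unneth splitting along the direct-product decomposition $G=(\text{core})\times C_p^{(k)}$ is genuinely different from the paper's, and the obstacle you yourself flag as ``genuinely delicate'' --- choosing a \emph{single} isomorphism of Schur multipliers that simultaneously matches pure, abelian, and mixed K\"unneth classes --- is exactly what the paper's method sidesteps. The paper works instead with the \emph{vertical} central extension $1\to G'\to G\to G/G'\to 1$: for every group in families (1)--(5) one has $G'\cong C_p$ central and $G/G'\cong C_p^{\,n-1}$, and the inflation--restriction sequence collapses to a short exact sequence $1\to\Hom(G',\mathbb C^\times)\xrightarrow{\tra}\Ho^2(G/G',\mathbb C^\times)\xrightarrow{\inf}\Ho^2(G,\mathbb C^\times)\to 1$. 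Lemma~\ref{lem:C-twist-criterion} then says that any pair of isomorphisms $\delta\colon G_1'\to G_2'$ and $\sigma\colon G_1/G_1'\to G_2/G_2'$ making the transgression square commute already induces a $\mathbb C$-twist isomorphism, because every projective representation of $G_i$ inflates from the abelian quotient and $\mathbb C^\alpha G_i\cong\prod_{\inf(\beta)=\alpha}\mathbb C^\beta[G_i/G_i']$. Each item is then reduced to writing down presentations and checking one cocycle identity for $\tra$; there is no separate bookkeeping of mixed classes.

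Two further remarks. First, your form $b_\alpha(\bar x,\bar y)=\alpha(x,y)\alpha(y,x)^{-1}$ is only well-defined on $G^{\mathrm{ab}}$ when $G$ is abelian; for the nonabelian cores here the group commutator $[x,y]\in G'$ enters, and packaging that correctly is precisely what the transgression map does for free. Second, your outline does not address completeness --- that no \emph{other} groups on Niroomand's list fall together --- which the paper handles for $n=4$ by invoking the Margolis--Schnabel classification of order-$p^4$ twist classes, and for $n\ge5$ by noting that the only remaining group $C_p\times(C_p^{4}\rtimes C_p)$ has $|G'|=p^2$, hence already a non-isomorphic ordinary group algebra.
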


%When $n \geq 5,$ we prove that corresponding to $s(G) = 2,$ the only non-singleton in the partition of $\Omega_{2^n}$ into $\mathbb{C}$-twist isomorphism classes is of size $2.$ Further, when $p$ is an odd prime and $n \geq 5,$ we obtain that the
%partition of $\Omega_{p^n}$ under the relation $\sim_{\mathbb{C}}$ has exactly $\Big( \lfloor (n/2)-1 \rfloor + 1 \Big)$ non-singleton classes; out of which $\lfloor (n/2)-1 \rfloor $ many contain three groups and one contains two groups. 

See Section~\ref{sec:s(G)leq2} for the proof of Lemma~\ref{lem:s(G)-0-1} and Theorem~\ref{thm:s(G)=2}. The next result describes the non-singleton $\mathbb{C}$-twist isomorphism classes for the groups of order $p^{n}$ with $s(G)=3$. A complete classification of these groups was given by Hatui~\cite[Theorem 1.1]{Hatui}. We refer the reader to Theorem~\ref{sumana} for the details and for the notation appearing in our next result.

\begin{theorem}\label{thm:s(G)=3}   For non-abelian groups $G$ of order $p^{n}$ with $s(G)=3$, all non-singleton $\mathbb{C}$-twist isomorphism classes are as follows:

  \begin{enumerate}
    \item $\phi_3(211)a \sim_{\mathbb C} \phi_3(211)b_1 \sim_{\mathbb C} \phi_3(211)b_{r_p};$
    \item $\phi_2(2111)c \sim_{\mathbb C} \phi_2(2111)d.$
  \end{enumerate}
\end{theorem}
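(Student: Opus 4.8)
The plan is to run the same machine used for Theorem~\ref{thm:s(G)=2}, now fed by Hatui's classification (Theorem~\ref{sumana}) of the $p$-groups with $s(G)=3$. Because $\mathbb{C}$ has characteristic zero, each twisted algebra $\mathbb{C}^\alpha G$ is semisimple, hence isomorphic to a product $\prod_i M_{d_i}(\mathbb{C})$, and its isomorphism type is recorded \emph{exactly} by the multiset of block sizes $\{d_i\}$ — equivalently by the multiset of degrees of the irreducible $\alpha$-projective representations of $G$. Thus to establish $G \sim_{\mathbb C} H$ it suffices to exhibit an isomorphism $\psi:\Ho^2(G,\mathbb{C}^\times)\to\Ho^2(H,\mathbb{C}^\times)$ such that for every class $[\alpha]$ the groups $G,H$ carry the same multiset of $\alpha$- and $\psi(\alpha)$-projective degrees. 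The coarsest necessary invariant, namely $|\Ho^2(G,\mathbb{C}^\times)|$, is automatic within each asserted relation: the groups in (1) share the James family $\phi_3(211)$ and those in (2) share $\phi_2(2111)$, so each relation involves groups of a common order $p^n$ with $s(G)=3$, forcing equal Schur multiplier orders by the defining formula $|\Ho^2(G,\mathbb{C}^\times)| = p^{\frac{(n-1)(n-2)}{2}+1-s(G)}$.

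To compute the degree data efficiently I would pass to a representation (Schur) group $\widehat{G}$ with central subgroup $A:=\ker(\widehat{G}\to G)\cong M(G)$. By Schur's theory the irreducible $\alpha$-projective representations of $G$ are precisely the irreducible ordinary representations $\rho$ of $\widehat{G}$ whose restriction $\rho|_A$ is a multiple of the linear character $\lambda$ matching $[\alpha]$ under the transgression isomorphism $\Hom(A,\mathbb{C}^\times)\xrightarrow{\sim}\Ho^2(G,\mathbb{C}^\times)$, and the degrees agree, so that
\[
  \mathbb{C}^\alpha G \;\cong\; \prod_{\substack{\rho\in\irr(\widehat{G})\\ \rho|_A=\deg(\rho)\,\lambda}} M_{\deg\rho}(\mathbb{C}).
\]
The whole problem then reduces to comparing, across the two groups, the assignment $\lambda\mapsto\{\deg\rho : \rho|_A=\deg(\rho)\lambda\}$ up to an isomorphism of the central subgroups. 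For the positive statements (1) and (2) the members of each family are \emph{isoclinic}, which makes them the natural candidates and supplies a matching of the commutator pairing $G/Z(G)\times G/Z(G)\to G'$ that governs these projective degrees (for the nondegenerate cocycles these degrees are powers of $p$ read off from the rank of the antisymmetrization form $B_\alpha(\bar g,\bar h)=\alpha(g,h)\alpha(h,g)^{-1}$). I would make this precise by writing out the explicit central extensions from the presentations in Theorem~\ref{sumana}, identifying the transgression, verifying that the degree-over-central-character data coincide class by class, and reading off the required isomorphism $\psi$ from that matching.

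The completeness assertion — that (1) and (2) list \emph{all} non-singleton classes — is where the genuine work lies, and the right tool is the $\psi$-independent invariant $\mathcal{D}(G)$ defined as the multiset $\{\,\text{block-size multiset of }\mathbb{C}^\alpha G : [\alpha]\in\Ho^2(G,\mathbb{C}^\times)\,\}$, which is preserved by $\sim_{\mathbb C}$ since $\psi$ merely permutes the cohomology classes. I would first partition the $s(G)=3$ groups by $|M(G)|$, and then, within each block, separate the remaining groups by exhibiting a discrepancy in $\mathcal{D}$ — for instance by counting the classes $[\alpha]$ for which $\mathbb{C}^\alpha G$ is a single matrix algebra, or comparing the degree multisets attached to the nondegenerate cocycles. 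The main obstacle is precisely these Wedderburn computations: one must determine the full distribution of irreducible projective degrees over \emph{all} cohomology classes for every group in Theorem~\ref{sumana}, and the delicate cases are those non-isoclinic pairs that nonetheless share $|M(G)|$ and coarse degree data, where a genuinely finer invariant must be produced to certify that their twisted algebras differ at some $[\alpha]$ and hence that the class is a singleton.
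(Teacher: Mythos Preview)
Your overall strategy matches the paper's: prove the positive assertions by passing to representation groups and comparing irreducible degrees over central characters (this is exactly how the paper handles (2), via Lemma~\ref{l1}, Lemma~\ref{l2}, and Proposition~\ref{p1}), and prove completeness by separating the remaining groups with $\sim_{\mathbb C}$-invariants. The implementation differs in two places worth noting.

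For (1), the three groups $\phi_3(211)a$, $\phi_3(211)b_1$, $\phi_3(211)b_{r_p}$ have order $p^4$, and the paper shortcuts your representation-group programme by invoking the Margolis--Schnabel criterion (Lemma~\ref{centraltype}): once one checks that their complex group algebras and Schur multipliers agree and that none is of central type, $\sim_{\mathbb C}$ is immediate. Your route would work but is heavier than needed here. For completeness, you anticipate having to compute the full multiset $\mathcal{D}(G)$ across \emph{all} cohomology classes; in fact the paper shows this is unnecessary. After sorting Hatui's list by order $p^n$ with $n\in\{4,5,6,7\}$, every pair not already accounted for by (1) or (2) is separated at the \emph{trivial} cocycle: their ordinary complex group algebras differ, as seen from elementary counts of linear characters and Clifford-theoretic degree computations (the only nontrivial case being $\phi_3(1^5)$ versus $\phi_7(1^5)$, handled in Lemma~\ref{lastlem}). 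So the ``main obstacle'' you flag largely evaporates. Finally, your isoclinism heuristic correctly predicts which groups ought to be $\mathbb{C}$-twist isomorphic, but the paper does not use isoclinism as a logical step; turning it into an actual proof of $\sim_{\mathbb C}$ would require exactly the central-character matching you describe afterwards, so it adds intuition but no economy.
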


A proof of the above result is included in Section~\ref{sec:s(G) = 3}. 

\begin{remark} In literature, classification of all finite $p$-groups $G$ with corank of $G$ (denoted by $t(G)$) atmost $6$ is also known, see~\cite{MR1140606, MR1705860, Jafari, MR2990893, MR3375732, MR1255666}. By definition, $t(G) \leq 6$ for any non-abelian group $G$ of order $p^n$ implies $n \leq 8$ and $s(G) \leq 5$. Further, $s(G) \in \{4,5\}$ gives $n \leq 4$. Therefore, our above description of $\mathbb C$-twist isomorphism classes along with the known results from literature also gives $\sim_{\mathbb C}$ classes of groups with $t(G) \leq 6$.
\end{remark}

\section{Preliminaries}

We first recall the results of {\bf Clifford theory} regarding the ordinary characters of a finite group. For proofs, see \cite[Chapter~6]{MR2270898}. Clifford theory provides an important connection between the complex characters of a finite group $G$ and its normal subgroups. For a finite group $G$, we use $\irr(G)$ to
denote the set of all inequivalent irreducible representations of $G$. For an abelian group $A$, we also use $\widehat{A}$ to denote $\irr(A)$ and call this to be the set of characters of $A$. For $\chi \in \irr(N)$, where $N$ is a normal subgroup of $G$, we use $\irr(G \mid \chi)$ to denote the set of irreducible representations of $G$ lying above $\chi$, i.e., $\irr(G \mid \chi) = \{ \rho \in \irr(G) \mid \langle \rho|_N, \chi \rangle \neq 0  \}$.

\begin{theorem}
  \label{thm:clifford}  Let $G$ be a finite group and $N$ be a normal subgroup. For any
  irreducible representation $\rho$ of $N$, let $I_G(\rho) =
    \{ g \in G |\,\, \rho^{g} \cong \rho \}$ denote the stabilizer of $\rho$ in $G$. Then
  the following hold:
  \begin{itemize}
    \item[(a)]  The map
      \[ \theta \mapsto \mathrm{Ind}_{I_G(\rho)}^G(\theta)
      \]
      is a bijection of $\irr(I_G(\rho) \mid \rho) $ onto $\irr(G \mid \rho)$.
    \item[(b)] Let $H$ be a subgroup of $G$ containing $N$, and suppose that $\rho$ is
      an irreducible representation of $N$ which has an extension $\tilde{\rho}$ to
      $H$ (i.e. $\tilde{\rho}|_{N} = \rho $). Then the representations $\delta \otimes
        \tilde{\rho}$
      for $\delta \in \irr(H/N)$ are irreducible, distinct for distinct $\delta$ and
      \[
        \mathrm{Ind}^{H}_{N}(\rho) = \oplus_{\delta \in \irr(H/N)} \delta \otimes
        \tilde{\rho}.
      \]
  \end{itemize}
\end{theorem}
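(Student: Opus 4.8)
The plan is to treat both parts through ordinary character theory, since the assertions about representations are equivalent to assertions about their characters, and the natural tools are Frobenius reciprocity, Mackey's subgroup formula, and the Schur orthogonality relations. Throughout, write $T = I_G(\rho)$ for the inertia (stabilizer) subgroup in part (a). Since $N \trianglelefteq G$, every $G$-conjugate ${}^g\rho$ again lies in $\irr(N)$, with ${}^g\rho = \rho$ precisely when $g \in T$; the distinct conjugates form a single $G$-orbit, which is the structural input I would draw from Clifford's theorem.

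For part (a), first observe that since $\rho$ is $T$-invariant, Clifford's theorem applied inside $T$ forces $\theta|_N = e\rho$ for some integer $e \geq 1$, for every $\theta \in \irr(T \mid \rho)$. The key computation is the inner product $\langle \mathrm{Ind}_T^G\theta, \mathrm{Ind}_T^G\theta\rangle_G$, which by Frobenius reciprocity equals $\langle \theta, \mathrm{Res}_T \mathrm{Ind}_T^G\theta\rangle_T$. Expanding $\mathrm{Res}_T \mathrm{Ind}_T^G\theta$ by Mackey's formula as a sum over double cosets $TgT$ and restricting each summand further down to $N$, I would use the disjointness of $\rho$ from ${}^g\rho$ for $g \notin T$ to annihilate every contribution except that of the trivial double coset, which contributes $\langle\theta,\theta\rangle_T = 1$. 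This proves $\mathrm{Ind}_T^G\theta$ is irreducible. Injectivity of $\theta \mapsto \mathrm{Ind}_T^G\theta$ follows from the same disjointness, since $\theta$ is then the unique constituent of $\mathrm{Res}_T \mathrm{Ind}_T^G\theta$ lying over $\rho$. For surjectivity, given $\chi \in \irr(G \mid \rho)$, I would pick any constituent $\theta$ of $\mathrm{Res}_T\chi$ lying over $\rho$ (one exists because $\rho \mid \mathrm{Res}_N\chi$); then Frobenius reciprocity gives $\langle\chi, \mathrm{Ind}_T^G\theta\rangle \neq 0$, and irreducibility of $\mathrm{Ind}_T^G\theta$ forces $\chi = \mathrm{Ind}_T^G\theta$.

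For part (b), since $\delta$ is inflated from $H/N$ it is trivial on $N$, so $(\delta \otimes \tilde\rho)|_N = (\deg\delta)\,\rho$, and in particular every $\delta \otimes \tilde\rho$ lies over $\rho$. The heart of the argument is the orthonormality relation
\[
\langle \delta_1 \otimes \tilde\rho,\ \delta_2 \otimes \tilde\rho\rangle_H = \langle \delta_1, \delta_2\rangle_{H/N}, \qquad \delta_1,\delta_2 \in \irr(H/N).
\]
Taking $\delta_1 = \delta_2$ shows each $\delta \otimes \tilde\rho$ is irreducible, and distinct $\delta$ yield orthogonal, hence inequivalent, characters. To obtain the full decomposition of $\mathrm{Ind}_N^H\rho$, I would compute $\langle \mathrm{Ind}_N^H\rho,\ \delta \otimes \tilde\rho\rangle_H = \langle \rho, (\delta \otimes \tilde\rho)|_N\rangle_N = \deg\delta$ by Frobenius reciprocity, and then verify the degree identity $\sum_\delta (\deg\delta)\deg(\delta \otimes \tilde\rho) = (\deg\rho)\sum_\delta (\deg\delta)^2 = (\deg\rho)\,[H:N] = \deg \mathrm{Ind}_N^H\rho$, so that these constituents already exhaust the induced character.

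The two places where I expect to do genuine work are the bookkeeping in (a) and the orthonormality identity in (b). In (a) the subtlety is organizational: one must restrict each Mackey summand down to $N$ to see that $\rho$ and its nontrivial conjugates never share a constituent, so that only the identity double coset survives. In (b) the real computation is the identity $\sum_{n \in N} |\tilde\rho(hn)|^2 = |N|$ for every $h \in H$, which is what collapses the inner product over $H$ to the inner product over $H/N$; this is exactly where the irreducibility of $\rho = \tilde\rho|_N$ enters, via the Schur orthogonality relations for the matrix coefficients of $\rho$ after choosing $\tilde\rho$ unitary. Once this identity is established, both the irreducibility/distinctness and the degree count follow at once.
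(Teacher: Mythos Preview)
The paper does not actually prove this theorem; it states the result as background and refers the reader to \cite[Chapter~6]{MR2270898} for proofs. Your proposal is correct and is essentially the standard argument found there: part~(a) is the Clifford correspondence (Theorem~6.11 in Isaacs), proved via Mackey's formula and the disjointness of the $G$-conjugates of $\rho$ outside the inertia group, and part~(b) is Gallagher's theorem (Corollary~6.17), whose crux is exactly the orthonormality $\langle \delta_1\tilde\rho,\delta_2\tilde\rho\rangle_H=\langle\delta_1,\delta_2\rangle_{H/N}$ that you isolate. Your matrix-coefficient computation of $\sum_{n\in N}|\tilde\rho(hn)|^2=|N|$ via Schur orthogonality for the irreducible restriction $\tilde\rho|_N$ is a clean way to obtain this identity, slightly more hands-on than the route Isaacs takes (which packages the same content as $(\rho^H)=\tilde\rho\cdot(1_N)^H$ and then counts norms), but the substance is the same.
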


Another useful result in this direction is as follows:
\begin{theorem}(\cite{MR2270898}, Corollary 11.22) Let $N \unlhd G$ and suppose $G/N$ is cyclic. Let $\rho \in \irr(N)$ such that $I_G(\rho) = G$. Then $\rho$ is extendible to G.
\end{theorem}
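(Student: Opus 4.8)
The plan is to build the extension by hand on a set of coset representatives of $N$ in $G$. Since $G/N$ is cyclic, I would fix a generator $gN$ of order $m$, so that $g^m \in N$ and every element of $G$ is \emph{uniquely} of the form $g^i n$ with $0 \le i < m$ and $n \in N$. The sought-after extension $\tilde\rho$ is then forced to have the shape $\tilde\rho(g^i n) = X^i\rho(n)$ for a single invertible operator $X = \tilde\rho(g)$, so the entire problem collapses to choosing $X$ correctly.

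I would first extract $X$ from the $G$-invariance hypothesis. Because $I_G(\rho) = G$, in particular $\rho^g \cong \rho$, where $\rho^g(n) = \rho(gng^{-1})$. As $\rho$ is irreducible, Schur's lemma provides an invertible intertwiner $P \in \GL(V)$, unique up to a nonzero scalar, satisfying $\rho(gng^{-1}) = P\,\rho(n)\,P^{-1}$ for all $n \in N$. For the recipe $\tilde\rho(g^i n) := X^i\rho(n)$ to define a homomorphism restricting to $\rho$ on $N$, exactly two conditions are required: the conjugation relation $X\rho(n)X^{-1} = \rho(gng^{-1})$ (forcing $X$ to be a rescaling of $P$), and the consistency relation $X^m = \rho(g^m)$ coming from $g^m \in N$.

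The crux is to arrange $X^m = \rho(g^m)$, and this is the step I expect to be the only delicate one. Iterating the defining relation of $P$ gives $P^m\rho(n)P^{-m} = \rho(g^m n g^{-m})$, and since $g^m \in N$ the right-hand side equals $\rho(g^m)\rho(n)\rho(g^m)^{-1}$. Hence $\rho(g^m)^{-1}P^m$ centralizes $\rho(N)$ and is therefore a scalar $c \in \mathbb{C}^\times$ by Schur's lemma, i.e. $P^m = c\,\rho(g^m)$. Now I would rescale: choosing $\lambda \in \mathbb{C}^\times$ with $\lambda^m = c^{-1}$ (possible precisely because $\mathbb{C}$ is algebraically closed) and setting $X = \lambda P$, the conjugation relation is preserved while $X^m = \lambda^m P^m = \rho(g^m)$, as needed.

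Finally I would verify that $\tilde\rho(g^i n) = X^i\rho(n)$ (with $0 \le i < m$) is genuinely a homomorphism extending $\rho$: multiplying $g^i n \cdot g^j n' = g^{i+j}(g^{-j}ng^j)n'$, moving $g^j$ past $n$ via $X^{-j}\rho(n)X^j = \rho(g^{-j}ng^j)$, and reducing the exponent $i+j$ modulo $m$ using $X^m = \rho(g^m)$, one checks that $\tilde\rho(g^i n)\tilde\rho(g^j n') = \tilde\rho(g^i n\cdot g^j n')$. The hypotheses enter exactly once each: irreducibility for Schur's lemma, normality plus cyclicity of $G/N$ to parametrize cosets by a single $g$ with $g^m \in N$, and the algebraic closedness of $\mathbb{C}$ to take the $m$-th root $\lambda$; it is the scalar extraction $P^m = c\,\rho(g^m)$ together with this root that forms the heart of the argument.
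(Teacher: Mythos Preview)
The paper does not supply a proof of this statement at all; it is quoted verbatim as Corollary~11.22 of \cite{MR2270898} and used as a black box. Your argument is correct and is in fact the standard one found in Isaacs: pick a preimage $g$ of a generator of $G/N$, use Schur's lemma on the $G$-invariance to produce an intertwiner $P$, observe via a second application of Schur that $P^m$ differs from $\rho(g^m)$ by a scalar $c$, and absorb $c$ by rescaling $P$ to $X=\lambda P$ with $\lambda^m=c^{-1}$. The verification that $\tilde\rho(g^in)=X^i\rho(n)$ is a homomorphism is routine, and you have identified precisely where each hypothesis is used. There is nothing to add or correct.
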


%In \cite{MS}, authors provided the $\mathbb{C}$-twist isomorphism classes of the groups of order $p^4.$ 
Recall that a finite group $G$ is said to be of \textit{central type} if it has a non-degenerate 2-cocycle $\alpha \in Z^{2}(G,\mathbb{C}^{*})$; or in other words, it has a unique irreducible $\alpha$-projective representation so that the twisted group algebra $\mathbb{C}^{\alpha}G$ is simple. The following result given by Margolis-Schnabel \cite{MS} will be used later to determine $\mathbb{C}$-twist isomorphism classes: 
\begin{lemma} \label{centraltype}
  Let $G$ and $H$ be groups of order $p^4$. Then $G \sim_{\mathbb C} H$ if the following three conditions   are satisfied. 
  \begin{enumerate}
    \item $\mathbb{C}G \cong \mathbb{C}H$
    \item $\Ho^2(G, \mathbb C^\times) \cong \Ho^2(H, \mathbb C^\times)$
    \item $G$ and $H$ are not of central type.
  \end{enumerate}

\end{lemma}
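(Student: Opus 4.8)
The plan is to show that, for a non-abelian group of order $p^4$ that is not of central type, the isomorphism type of the twisted group algebra $\mathbb{C}^\alpha G$ is one and the same for every non-trivial cohomology class $[\alpha]$; once this is established, essentially any isomorphism of Schur multipliers will witness $G \sim_{\mathbb{C}} H$. First I would recall that, since we work over $\mathbb{C}$, each twisted group algebra is semisimple, so $\mathbb{C}^\alpha G \cong \bigoplus_i M_{d_i}(\mathbb{C})$, where the $d_i$ are the degrees of the irreducible $\alpha$-projective representations of $G$ and $\sum_i d_i^2 = |G| = p^4$; the isomorphism type of $\mathbb{C}^\alpha G$ is completely determined by the multiset $\{d_i\}$. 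Because the $\alpha$-projective representations of the $p$-group $G$ are afforded by ordinary representations of a representation group of $G$, which is again a $p$-group, each $d_i$ is a power of $p$; together with $d_i^2 \le p^4$ this forces $d_i \in \{1, p, p^2\}$.

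Next I would pin down which degrees can actually occur. A one-dimensional component corresponds to an algebra homomorphism $\mathbb{C}^\alpha G \to \mathbb{C}$, equivalently to a map $\lambda\colon G \to \mathbb{C}^\times$ with $\partial \lambda = \alpha$, so such a component exists precisely when $[\alpha]$ is trivial. On the other hand, a single component of degree $p^2$ would already account for all of $\dim \mathbb{C}^\alpha G = p^4$, forcing $\mathbb{C}^\alpha G$ to be simple, i.e. $\alpha$ to be non-degenerate; by hypothesis (3) this never occurs for $G$. Hence for every non-trivial $[\alpha]$ all degrees equal $p$, and counting gives exactly $p^4/p^2 = p^2$ blocks, so $\mathbb{C}^\alpha G \cong M_p(\mathbb{C})^{\oplus p^2}$. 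In particular this algebra is independent both of the chosen non-trivial class and of the group.

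Finally I would assemble $\psi$. By hypothesis (2) there is a group isomorphism $\psi\colon \Ho^2(G,\mathbb{C}^\times)\to \Ho^2(H,\mathbb{C}^\times)$, and any such $\psi$ sends the trivial class to the trivial class and non-trivial classes to non-trivial classes. For $[\alpha]$ trivial, $\mathbb{C}^\alpha G \cong \mathbb{C}G \cong \mathbb{C}H \cong \mathbb{C}^{\psi(\alpha)}H$ by hypothesis (1); for $[\alpha]$ non-trivial, applying the previous paragraph to both $G$ and $H$, each of which is not of central type, gives $\mathbb{C}^\alpha G \cong M_p(\mathbb{C})^{\oplus p^2} \cong \mathbb{C}^{\psi(\alpha)}H$. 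Thus $\mathbb{C}^\alpha G \cong \mathbb{C}^{\psi(\alpha)}H$ for every $[\alpha]$, which is exactly $G \sim_{\mathbb{C}} H$.

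The main obstacle, and the only place where real structure enters, is the degree analysis of the middle paragraph: one must be certain that the simultaneous absence of one-dimensional and $p^2$-dimensional components forces the uniform decomposition $M_p(\mathbb{C})^{\oplus p^2}$. This hinges on two facts that I would want to state carefully, namely that the degrees of the irreducible projective representations of a $p$-group are powers of $p$, and that the non-central-type hypothesis rules out the top degree $p^2$ for \emph{every} cocycle at once rather than for a single one. Everything after that is bookkeeping with the trivial-class case handled directly by hypothesis (1).
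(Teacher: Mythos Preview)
The paper does not supply its own proof of this lemma; it is quoted from Margolis--Schnabel \cite{MS} and used as a black box. Your argument is correct and is essentially the standard one that appears in that reference: the dimensions of irreducible $\alpha$-projective representations of a $p$-group are $p$-powers summing in squares to $|G|=p^4$, degree $1$ occurs only for the trivial class, degree $p^2$ forces $\alpha$ to be non-degenerate, and so the non-central-type hypothesis pins every non-trivial twisted algebra to $M_p(\mathbb{C})^{\oplus p^2}$.

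One cosmetic point: you open by restricting to a ``non-abelian'' group of order $p^4$, but the lemma as stated does not assume this. Your argument does not actually use non-abelianness anywhere (the three degree facts hold for abelian $G$ just as well), so you can simply drop the word. If you prefer to keep the case split, note that condition~(1) forces $G$ and $H$ to be simultaneously abelian or simultaneously non-abelian, and the abelian case then goes through verbatim.
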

%\footnote{this needs to improved as it is not a complete result for $p^4$ case}

\section{$p$-groups with $s(G) \leq 2$}
\label{sec:s(G)leq2}

\noindent In this section, we study the $\mathbb{C}$-twist isomorphism classes of $p$-groups with $s(G) \leq  2$.
We first deal with the case of $s(G) \in \{0,1\}$.

\begin{proof}[Proof of Lemma~\ref{lem:s(G)-0-1}]
  Niroomand~\cite[Theorem~21, Corollary 23]{N1} proved the following classification of finite non-abelian $p$-groups $G$ with $s(G) \in \{0,1\}$:
  \begin{itemize}
    \item[(a)] A group $G$ has $s(G) = 0$ if, and only if, $G$ is isomorphic to $H_{1}^{1} \times C_{p}^{(n-3)}$.
    \item[(b)] A group $G$ has $s(G) = 1$ if, and only if, $G$ is isomorphic to $D_{8} \times C_{2}^{(n-2)}$ or $C_{p}^{(4)} \rtimes C_{p}\, (p \neq 2)$.
  \end{itemize}

  We remark that in \cite{N1}, Niroomand uses the corank of a group $G$ (denoted $t(G)$)  instead of the generalized corank of $G$. We have used the well known relation $t(G) = s(G) + (n-2)$ for any non-abelian $p$-group $G$ to use the results of \cite{N1}. We obtain Lemma~\ref{lem:s(G)-0-1} by  observing that all of the above groups with fixed $s(G)$ have non-isomorphic complex group algebras.
\end{proof}

The rest of this section is devoted to the $s(G) = 2$ case. The following result from \cite{N4} classifies all the non-abelian groups of order $p^n$ with $s(G) = 2.$

\begin{theorem} (\cite{N4}, Theorem 11) \label{thm:niroo-s(G)=2} Let $G$ be a group of order $p^n$. Then $s(G) = 2$ if, and only if, $G$ is isomorphic to one of the following:

  \begin{itemize}
    \item [(i)] $E(2) \times C_p^{(n-2m-2)}$
    \item [(ii)] $H_1^{2} \times C_p^{(n-3)}$
    \item [(iii)] $Q_8 \times C_2^{(n-3)}$
    \item [(iv)]\begin{itemize}
            \item [(a)] $H_m^1 \times C_p^{(n-2m-1)},$ where $H_m^1$ is an extraspecial $p$-group of order $p^{2m+1}$ and exponent $p~(m \geq 2)$
            \item [(b)] $H_m^2 \times C_p^{(n-2m-1)},$ where $H_m^2$ is an extraspecial $p$-group of order $p^{2m+1}$ and exponent $p^2~(m \geq 2)$ \end{itemize}

    \item [(v)] $\langle a,b~|~a^4=1,b^4=1,[a,b,a]=[a,b,b]=1, [a,b]=a^2b^2 \rangle$ %(SmallGroup(16,3)),
    \item [(vi)] $ \langle a,b,c~|~a^2=b^2=c^2=1, abc=bca=cab \rangle$ %(SmallGroup(16,13))
    \item [(vii)] $\langle a,b~|~a^{p^2}=1,b^{p}=1, [a,b,a]=[a,b,b]=1 \rangle$
    \item [(viii)] $C_{p} \times (C_{p}^4 \rtimes_{\theta} C_{p})~(p \neq 2)$
    \item [(ix)] $\langle a,b~|~ a^9=b^3=1, [a,b,a] =1, [a,b,b]=a^6, [a,b,b,b]=1 \rangle $
    \item [(x)] $\langle a,b~|~ a^p=1, b^p=1, [a,b,a]=[a,b,b,a]=[a,b,b,b]=1 \rangle~(p \neq 3).$
  \end{itemize}
  %\end{itemize} 
\end{theorem}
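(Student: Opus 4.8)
The statement is an ``if and only if'' classification, so the plan is to treat the two directions by quite different means. For the \emph{sufficiency} direction I would verify directly that each group in the list~(i)--(x) has $s(G)=2$, i.e. that its Schur multiplier $M(G):=\Ho^2(G,\mathbb{C}^\times)$ has order exactly $p^{\frac{(n-1)(n-2)}{2}-1}$. For the \emph{necessity} direction I would show that no group outside the list can realise this multiplier order. The two engines throughout are Niroomand's bound $|M(G)|\le p^{\frac{(n-1)(n-2)}{2}+1}$ quoted in the introduction, together with its sharpening that records the dependence on the derived subgroup, and Schur's formula for direct products
\[
M(A\times B)\cong M(A)\oplus M(B)\oplus\big(A^{\mathrm{ab}}\otimes_{\mathbb{Z}}B^{\mathrm{ab}}\big),
\]
together with $M(C_p^{(d)})\cong C_p^{\binom{d}{2}}$.

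For the sufficiency direction, every group in the list is a direct product of a small non-abelian ``core'' (an extraspecial group $H_m^1$ or $H_m^2$, the quaternion group $Q_8$, one of the central products $E(2)$, or one of the presented groups in (v)--(x)) with an elementary abelian factor $C_p^{(\ell)}$. First I would record the multiplier of each core: the known values $|M(H_m^1)|=|M(H_m^2)|=p^{2m^2-m-1}$ for $m\ge 2$, the triviality of $M(Q_8)$ and of $M(H_1^2)$ for odd $p$, and analogous facts for the remaining cores, which for the sporadic presentations I would extract by hand from Hopf's formula $M(F/R)\cong (R\cap[F,F])/[F,R]$. Since the direct-product formula is uniform in the elementary abelian factor, the passage from a core $K$ to $K\times C_p^{(\ell)}$ alters the multiplier order in a controlled way through the $K^{\mathrm{ab}}\otimes C_p^{(\ell)}$ and $M(C_p^{(\ell)})$ contributions, and I would check that in each family the resulting exponent equals $\frac{(n-1)(n-2)}{2}-1$ uniformly in $n$ and $m$.

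The necessity direction is where the real work lies, and the first step is to constrain $|G'|$. Writing $|G'|=p^{k}$, the refinement of Niroomand's bound reads
\[
|M(G)|\le p^{\frac{(n-1)(n-2)}{2}+1-\binom{k}{2}},\qquad\text{hence}\qquad s(G)\ge\binom{k}{2}.
\]
Thus $s(G)=2$ forces $\binom{k}{2}\le 2$, i.e. $k\in\{1,2\}$. When $k=1$ one has $G'\le Z(G)$ (since $\mathrm{Aut}(C_p)$ has order prime to $p$), so $G$ has nilpotency class $2$ and $G/Z(G)$ carries a non-degenerate alternating form and is therefore $C_p^{(2m)}$; such groups are central products of an extraspecial group with an abelian group, a family on which the multiplier is computable by the central/direct-product formulas, and I would sieve out precisely those with $s=2$. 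When $k=2$ the inequality only gives $s\ge 1$, so here the finer step is to separate the class-$2$ groups (with $G'\cong C_p^{(2)}$ central) from the genuinely class-$3$ groups such as (ix) and (x), and to decide in each case whether $|M(G)|$ lands exactly one power of $p$ below the $k=2$ maximum.

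The hard part will be this necessity analysis in the $k=2$ case and for the small primes. The generic bound argument no longer isolates a single multiplier value once $k=2$, and the exceptional families at $p=2$ (the order-$16$ groups (v),(vi) and the $Q_8$ family) and the split at $p=3$ versus $p\neq 3$ (the groups (ix) and (x)) are exactly the places where the uniform structure theory breaks down and each group must be handled by an explicit multiplier computation. Keeping the list both complete and irredundant --- showing that every class-$2$, $|G'|=p$ group with $s=2$ already appears, and that no further group with $|G'|=p^2$ achieves $s=2$ --- is the delicate bookkeeping on which the whole argument turns.
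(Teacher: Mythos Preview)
The paper does not prove this theorem at all: it is quoted, with attribution, as Theorem~11 of Niroomand~\cite{N4} and then used as a black box in the proof of Theorem~\ref{thm:s(G)=2}. There is therefore no ``paper's own proof'' against which to compare your proposal; the authors simply import the classification from the literature.

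As for the proposal itself, the broad architecture (verify the multiplier of each listed group via Schur's direct-product formula and Hopf's formula; for necessity, bound $|G'|$ and then run a structural case analysis) is the right shape, and indeed this is roughly how Niroomand's original argument proceeds. Two specific points would need repair before the sketch could be made rigorous. First, the displayed inequality $|M(G)|\le p^{\frac{(n-1)(n-2)}{2}+1-\binom{k}{2}}$ with $|G'|=p^k$, and hence $s(G)\ge\binom{k}{2}$, is not a standard form of Niroomand's refinement; the bounds in the literature that track $|G'|$ have a different dependence on $k$, so this step either needs a correct citation or an independent proof. Second, in the $|G'|=p$ branch you assert that $G$ is a central product of an extraspecial group with an abelian group. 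This is not automatic: $p$-groups of class~$2$ with cyclic commutator subgroup of order $p$ form a genuinely larger family (for instance $G/Z(G)$ need not be elementary abelian without further hypotheses on exponents), and isolating exactly the families (i)--(iv) among them is part of the work, not a consequence of the alternating-form remark. You are right that the $|G'|=p^2$ case and the small-prime exceptions are where the delicate bookkeeping lives.
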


In order to prove Theorem \ref{thm:s(G)=2}, we need the following lemma:

\begin{lemma}
  \label{lem:C-twist-criterion}
  Suppose $G_1$ and $G_2$ are two groups with isomorphisms $\delta: G_1' \rightarrow G_2',$ $\sigma: G_1/G_1' \rightarrow G_2/G_2'$ and the following short exact sequences for $i \in \{ 1, 2\} :$
  \[
    \xymatrix{
    1 \ar[r] &  \mathrm{Hom}(G_i', \mathbb C^\times) \ar[r] ^{\mathrm{tra}_i} & H^2(G_i/G_i', \mathbb C^\times) \ar[r]\ar[r]^{\mathrm{inf_i}} & H^2(G_i, \mathbb C^\times) \ar[r] &  1.
    }
  \]
  If $\tilde{\delta}:  \Hom(G_2', \mathbb{C}^\times) \rightarrow \Hom(G_1', \mathbb C^\times)$ and $\tilde{\sigma}: \Ho^2(G_1/G_1', \mathbb C^\times) \rightarrow \Ho^2(G_2/G_2', \mathbb C^\times)$ are the induced isomorphisms such that the Figure~\ref{fig:commutative} is commutative, then $G_{1}\sim_{\mathbb{C}}G_{2}$.
  \begin{figure}[h]
    \[\xymatrix{
      1 \ar[r] &  \mathrm{Hom}(G_2', \mathbb C^\times) \ar[r] ^{\mathrm{tra}_2} \ar[d]^{\tilde{\delta}} & H^2(G_2/G_2', \mathbb C^\times) \ar[r] \ar[d]^{\tilde{\sigma}} \ar[r]^{\mathrm{inf_2}} & H^2(G_2, \mathbb C^\times) \ar[r] &  1 \\
      1 \ar[r] &  \mathrm{Hom}(G_1', \mathbb C^\times) \ar[r] ^{\mathrm{tra}_1} & H^2(G_1/G_1', \mathbb C^\times) \ar[r] ^{\mathrm{inf_1}} & H^2(G_1, \mathbb C^\times) \ar[r] &  1
      }
    \]
    \caption{} \label{fig:commutative}
  \end{figure}
\end{lemma}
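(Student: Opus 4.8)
The plan is to first reduce $G_1 \sim_{\mathbb{C}} G_2$ to a statement about matrix block sizes, then to build the isomorphism $\psi$ by a diagram chase, and finally to match block sizes by decomposing each $\mathbb{C}^\alpha G_i$ along the central subgroup $G_i'$. Since $\mathbb{C}$ has characteristic zero, every twisted group algebra $\mathbb{C}^\gamma G_i$ is semisimple, hence determined up to $\mathbb{C}$-algebra isomorphism by the multiset of dimensions of its irreducible $\gamma$-projective representations. So it suffices to produce an isomorphism $\psi : \Ho^2(G_1,\mathbb{C}^\times) \to \Ho^2(G_2,\mathbb{C}^\times)$ such that, for each $[\alpha]$, the algebras $\mathbb{C}^{\alpha}G_1$ and $\mathbb{C}^{\psi(\alpha)}G_2$ carry the same list of irreducible projective degrees.

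To construct $\psi$, I would exploit that both rows of Figure~\ref{fig:commutative} are short exact with the two left vertical maps $\tilde\delta,\tilde\sigma$ isomorphisms. A straightforward diagram chase then yields a unique isomorphism $\psi$ between the right-hand terms $\Ho^2(G_i,\mathbb{C}^\times)$ making the ladder commute. Concretely, because $\mathrm{inf}_1$ is surjective, each $[\alpha]\in \Ho^2(G_1,\mathbb{C}^\times)$ equals $\mathrm{inf}_1([\beta])$ for some $[\beta]\in \Ho^2(G_1/G_1',\mathbb{C}^\times)$, and one sets $\psi([\alpha]) = \mathrm{inf}_2(\tilde\sigma([\beta]))$. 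Commutativity of the left square, $\tilde\sigma\circ\mathrm{tra}_1 = \mathrm{tra}_2\circ\tilde\delta^{-1}$, shows $\tilde\sigma$ maps $\ker(\mathrm{inf}_1)=\mathrm{im}(\mathrm{tra}_1)$ onto $\ker(\mathrm{inf}_2)=\mathrm{im}(\mathrm{tra}_2)$, so $\psi$ is well defined and bijective.

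The substantive step is matching the degrees. Here I would use that $\alpha = \mathrm{inf}_1(\beta)$ is pulled back from the abelianization $Q_i = G_i/G_i'$, so it restricts trivially to $N_i := G_i'$ (abelian and central for the groups under consideration). Writing $\omega_i$ for the commutator cocycle of $1 \to N_i \to G_i \to Q_i \to 1$ and decomposing along the central characters $\lambda \in \widehat{N_i}$ via Clifford theory (Theorem~\ref{thm:clifford}), one obtains
$$\mathbb{C}^{\mathrm{inf}_i(\beta_i)} G_i \;\cong\; \bigoplus_{\lambda \in \widehat{N_i}} \mathbb{C}^{\,\beta_i\cdot(\lambda\circ\omega_i)}\, Q_i,$$
each summand being a twisted group algebra of the abelian group $Q_i$, whose blocks are governed by the radical of the associated alternating form. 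The linchpin is the classical identification of the transgression with the commutator pairing, $\mathrm{tra}_i(\lambda) = [\lambda\circ\omega_i]$. Given $\beta_2 := \tilde\sigma(\beta_1)$, I would transport the $\lambda$-summand of $G_1$ through $\sigma$ (so that $\mathbb{C}^{\gamma}Q_1\cong\mathbb{C}^{\tilde\sigma(\gamma)}Q_2$), set $\mu := \tilde\delta^{-1}(\lambda)\in\widehat{N_2}$, and use the left square to compute $\tilde\sigma(\lambda\circ\omega_1)=\tilde\sigma(\mathrm{tra}_1\lambda)=\mathrm{tra}_2(\mu)=\mu\circ\omega_2$. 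Thus the $\lambda$-summand of $\mathbb{C}^{\alpha}G_1$ is isomorphic to the $\mu$-summand of $\mathbb{C}^{\psi(\alpha)}G_2$; since $\lambda\mapsto\mu$ is a bijection $\widehat{N_1}\to\widehat{N_2}$, summing gives $\mathbb{C}^{\alpha}G_1\cong\mathbb{C}^{\psi(\alpha)}G_2$.

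The main obstacle is not the diagram chase but establishing that the displayed decomposition is controlled \emph{exactly} by the data carried across by $\delta$ and $\sigma$: one must verify that the cocycle attached to each central character $\lambda$ is precisely $\mathrm{tra}_i(\lambda)$, so that the commutativity of Figure~\ref{fig:commutative} is the exact compatibility needed to identify summands, and that $\tilde\sigma$, being induced by the isomorphism $\sigma$, genuinely realizes $\mathbb{C}^{\gamma}Q_1\cong\mathbb{C}^{\tilde\sigma(\gamma)}Q_2$ summand by summand. Once this transgression-versus-pairing identification is pinned down, the degree multisets agree automatically and the conclusion $G_1\sim_{\mathbb{C}}G_2$ follows.
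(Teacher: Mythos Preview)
Your proposal is correct and follows essentially the same route as the paper: define the isomorphism on $\Ho^2(G_i,\mathbb{C}^\times)$ by the diagram chase, invoke the decomposition $\mathbb{C}^\alpha G_i \cong \bigoplus_{\mathrm{inf}_i(\beta)=\alpha} \mathbb{C}^\beta[G_i/G_i']$, and conclude since $\tilde\sigma$ (being induced by $\sigma$) both carries fibers of $\mathrm{inf}_1$ bijectively to fibers of $\mathrm{inf}_2$ and realizes $\mathbb{C}^\beta[G_1/G_1'] \cong \mathbb{C}^{\tilde\sigma(\beta)}[G_2/G_2']$. The only difference is that the paper quotes this decomposition from outside references (Karpilovsky, Theorem~2.9, and \cite{MR4232687}, Theorem~3.1), whereas you rederive it via Clifford theory and the identification $\mathrm{tra}_i(\lambda)=[\lambda\circ\omega_i]$; your explicit fiber parametrization $\beta=\beta_i\cdot\mathrm{tra}_i(\lambda)$ is exactly what underlies the paper's cited formula.
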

\begin{proof}
  Our goal is to define an isomorphism $\gamma: \Ho^2(G_2, \mathbb C^\times) \rightarrow \Ho^2(G_1, \mathbb C^\times)$ that gives $\mathbb C$-twist isomorphism between $G_2$ and $G_1$.
  It follows from Theorem $2.9$ in \cite{MR788161} and Theorem $3.1$ in \cite{MR4232687} that the projective representations of $G_i$ are obtained from those of $G_i/G_i'$ via inflation
  and
  \begin{eqnarray}
    \label{eq:cong-group-algebra}
    \mathbb C^\alpha[G_i] \cong \prod_{\mathrm{inf_i}(\beta) = \alpha} \mathbb{C}^\beta[G_i/G_i'].
  \end{eqnarray}

  The map $\tilde{\sigma}$ is an induced isomorphism obtained from $\sigma$. Hence $\mathbb{C}^\beta[G_1/G_1'] \cong \mathbb{C}^{\tilde{\sigma}(\beta)}[G_2/G_2'].$  Therefore, it is sufficient to define an isomorphism $\gamma: \Ho^2(G_2, \mathbb C^\times) \rightarrow \Ho^2(G_1, \mathbb C^\times)$ such that Figure~\ref{fig:commutative} is commutative. Indeed such a $\gamma$ is obtained by defining
  \[
    \gamma([\alpha]) = \mathrm{inf}_1 (\tilde{\sigma}([\alpha_0])) \,\, \mathrm{for}\,\, [\alpha] \in H^2(G_2, \mathbb C^\times),
  \]
  where $[\alpha_0] \in H^2(G_2/G_2', \mathbb C^\times)$ is any element such that $\inf_2([\alpha_0]) = [\alpha]$.
\end{proof}

\begin{proposition}\label{prop1} The distinct $\mathbb{C}$-twist isomorphism classes of groups of order $p^n$ are as follows:
  \begin{itemize}
    \item [(i)] \begin{itemize} \item[(a)] $E(2) \times C_p^{(n-4)} \sim_{\mathbb C} H_1^{2} \times C_p^{(n-3)},$ for $p \neq 2$
            \item[(b)] $E(2) \times C_2^{(n-4)} \sim_{\mathbb C} Q_8 \times C_2^{(n-3)}$  \end{itemize}

    \item [(ii)] \begin{itemize} \item[(a)] For $n = 2m+1$ and $m \geq 2$, $H_m^1 \times C_p^{(n-2m-1)} \sim_{\mathbb C} H_m^2 \times C_p^{(n-2m-1)}$

            \item [(b)] For $n \geq 6$ and $m \geq 2$, $E(2) \times C_p^{(n-2m-2)} \sim_{\mathbb C} H_m^1 \times C_p^{(n-2m-1)} \sim_{\mathbb C} H_m^2 \times C_p^{(n-2m-1)}.$
          \end{itemize}

  \end{itemize}
\end{proposition}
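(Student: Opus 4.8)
The plan is to verify, for each claimed pair (or triple), the hypotheses of Lemma~\ref{lem:C-twist-criterion}. Every group $G$ appearing in the statement is nilpotent of class $2$ with derived subgroup $G' \cong C_p$ contained in the centre and with elementary abelian abelianization $G/G'$; for such groups the five-term exact sequence collapses to the short exact sequence assumed in Lemma~\ref{lem:C-twist-criterion}, and the transgression $\tra \colon \Hom(G', \mathbb{C}^\times) \to \Ho^2(G/G', \mathbb{C}^\times)$ admits an explicit description. Writing $V = G/G' \cong \mathbb{F}_p^{\,d}$ and fixing an identification $G' \cong \mathbb{F}_p$, the commutator map descends to a well-defined alternating bilinear form $c \colon V \times V \to \mathbb{F}_p$ (well-defined because $G' \subseteq Z(G)$), and under the identification $\Ho^2(V, \mathbb{C}^\times) \cong \wedge^2 V^*$ the transgression of a faithful character of $G'$ is exactly the class of $c$. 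Thus a pair $(\delta, \sigma)$ makes the left-hand square of Figure~\ref{fig:commutative} commute precisely when $\sigma \colon V_1 \to V_2$ carries $c_1$ to $c_2$, and once the left square commutes the surjectivity of inflation forces the induced map $\gamma$ on $\Ho^2(G_i, \mathbb{C}^\times)$ to exist and to make the right square commute.

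Given this reduction, the task for each item becomes purely linear-algebraic: exhibit an isomorphism $G_1' \cong G_2'$ together with an isometry between the commutator forms $c_1$ and $c_2$. Since two alternating forms on $\mathbb{F}_p$-vector spaces of the same dimension are equivalent if and only if they have equal rank, I would compute for each group the triple consisting of $\dim_{\mathbb{F}_p} G'$, $\dim_{\mathbb{F}_p}(G/G')$ and $\operatorname{rank} c$, and check these agree across the members of each class. Concretely: for an extraspecial group $H_m^{1}$ or $H_m^{2}$ of order $p^{2m+1}$ one has $G' \cong C_p$, $G/G' \cong \mathbb{F}_p^{2m}$ and $c$ a non-degenerate symplectic form of rank $2m$; for $Q_8$ these invariants are $(1,2,2)$; and for the central product $E(2) = E \cdot C_{p^2}$ with $E$ extraspecial of order $p^{2k+1}$, the extra central cyclic factor enlarges $G/G'$ by one to $\mathbb{F}_p^{2k+1}$ while leaving $\operatorname{rank} c = 2k$, since the image of the generator of $C_{p^2}$ becomes a radical vector of $c$. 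A direct product with $C_p^{(\ell)}$ adds $\ell$ to $\dim(G/G')$ and enlarges the radical of $c$ without changing its rank.

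With these computations in hand each item follows by matching invariants. For (i) the two (respectively three) groups all have $G/G' \cong \mathbb{F}_p^{\,n-1}$ and $\operatorname{rank} c = 2$; for (ii)(a), with $n = 2m+1$, both groups give $G/G' \cong \mathbb{F}_p^{2m}$ with non-degenerate $c$ of rank $2m$; and for (ii)(b) all three groups give $G/G' \cong \mathbb{F}_p^{\,n-1}$ with $\operatorname{rank} c = 2m$, where here $E(2)$ means $E \cdot C_{p^2}$ for $E$ extraspecial of order $p^{2m+1}$. In every case the equal ranks produce a symplectic change of basis $\sigma$ intertwining $c_1$ and $c_2$, which together with the identification $\delta$ of the derived subgroups yields the commutative diagram, so Lemma~\ref{lem:C-twist-criterion} gives the asserted $\sim_{\mathbb{C}}$ relations.

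The main obstacle I anticipate is not the rank bookkeeping but pinning down the transgression precisely: I must justify the identification of $\tra$ with the commutator form and verify that an isometry $\sigma$ of forms, rather than merely an abstract isomorphism $G_1/G_1' \cong G_2/G_2'$, is what makes the left square commute, together with the fact that commutativity of that square suffices (via surjectivity of inflation) to produce $\gamma$. A secondary point requiring care is the overloaded notation $E(2)$, whose order is $p^4$ in item (i) but $p^{2m+2}$ in item (ii)(b); I would state its intended meaning explicitly before each computation so that the counts $\dim(G/G') = n-1$ come out correctly in both settings.
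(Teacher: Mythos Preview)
Your proposal is correct and follows the same overall strategy as the paper: both invoke Lemma~\ref{lem:C-twist-criterion} and reduce the question to making the left square of Figure~\ref{fig:commutative} commute. The difference lies in the execution. The paper writes down explicit presentations of each group, chooses explicit sections $s_i\colon G_i/G_i'\to G_i$, computes $s_i(u)s_i(v)s_i(uv)^{-1}$ by hand, reads off $\tra_i(\chi)$ as $\chi$ applied to a product of commutators, and then exhibits concrete isomorphisms $\delta$ and $\sigma$ generator-by-generator. You instead package all of this into the single observation that, for class-$2$ groups with $G'\cong C_p$ central and $G/G'$ elementary abelian, the transgression of a faithful character is (the class of) the commutator form on $V=G/G'$, and then invoke the classification of alternating $\mathbb{F}_p$-forms by rank. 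Your route is cleaner and scales without repetition across the several cases; the paper's route is more self-contained, since it never has to justify the identification $\Ho^2(V,\mathbb{C}^\times)\cong\wedge^2V^*$ or the equality ``transgression $=$ commutator form'' abstractly (the explicit section computation \emph{is} that justification). Your anticipated obstacles are exactly the right ones: the paper's computation of $s(u)s(v)s(uv)^{-1}$ is precisely what pins down $\tra$ as the commutator form, and you are also right that the surjectivity of $\inf_i$ (needed for the short exact sequence and for the existence of $\gamma$) is not automatic from the five-term sequence---the paper obtains it by comparing the known orders $|\Ho^2(G_i/G_i',\mathbb{C}^\times)|$ and $|\Ho^2(G_i,\mathbb{C}^\times)|$, which you should do as well. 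Your remark about the $m$-dependence hidden in the symbol $E(2)$ is also on point and matches the paper's handling.
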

\begin{proof}
  %We use Lemma~\ref{lem:C-twist-criterion} to prove this result. 
  We proceed to prove (i). %We include here all the details. 
  The proof of (ii) is similar so we only give essential ingredients there and omit the details.

  (i)(a) For simplification of notations, we denote $E(2) \times C_p^{(n-4)}$ and $H_1^2 \times C_p^{(n-3)}$ by $G_1$ and $G_2$, respectively. The groups $G_1$ and $G_2$ have following presentations:
  \[
    G_1 = <x_1,y_1,z_1, \gamma_1, a_1, a_2, \cdots a_{n-4} \mid [x_1,y_1] = z_1 = \gamma_1^p, x_1^p = y_1^p = a_i^p = 1, \gamma_1^{p^2} = 1>
  \]
  \[
    G_2 = < x_2,y_2,z_2, b_1, \cdots, b_{n-3} \mid [x_2,y_2] = x_2^p = z_2, x_2^{p^2} = y_2^p = z_2^p =  b_i^p = 1>.
  \]
  Therefore, $G_i'  \cong C_{p}$  and $G_i/G_i' \cong  C_{p}^{ n-1}$ for $i \in \{1, 2\}$. Also, in view of Proposition 1.3 of \cite{MR788161},
  \[
    \Ho^2(G_i/G_i', \mathbb C^\times) \cong C_{p}^{ \frac{(n-1)(n-2)}{2}}, \,\,  \Ho^2(G_i, \mathbb C^\times) \cong C_{p}^{ \frac{(n-1)(n-2)}{2}-1}.
  \]
  This yields the following short exact sequences for $i \in \{1, 2\}$:
  \[
    \xymatrix{
    1 \ar[r] &  \mathrm{Hom}(G_i', \mathbb C^\times) \ar[r] ^{\mathrm{tra}_i} & H^2(G_i/G_i', \mathbb C^\times) \ar[r]\ar[r]^{\mathrm{inf_i}} & H^2(G_i, \mathbb C^\times) \ar[r] &  1.
    }
  \]
  We now define $\delta: G_1' \rightarrow G_2'$ and $\sigma: G_1/G_1' \rightarrow G_2/G_2'$ such that the Figure~\ref{fig:commutative} is commutative. Define $\delta$ by $\delta(z_1) = z_2$ and $\sigma$ by
  \[
    \sigma(x_1 G_1^{'}) = x_2 G_2^{'},   \sigma(y_1 G_1^{'}) = y_2 G_2^{'},   \sigma(\gamma_1 G_1^{'}) = b_{n-3} G_2^{'},  \sigma (a_i G_1^{'}) = b_i G_2^{'},
  \]
  for all $i \in \{1,\ldots, n-4\}$. We now describe transgression maps for these groups.

  Define a section $s_1:G_1/G_1' \rightarrow G_1$ by
  \[
    s_1(x_1^i y_1^j \gamma_1^k a_1^{r_1} \cdots a_{n-4}^{r_{n-4}}G_1' ) =  x_1^i y_1^j \gamma_1^k a_1^{r_1} \cdots a_{n-4}^{r_{n-4}}
  \]
  For $u=  x_1^i y_1^j \gamma_1^k a_1^{r_1} \cdots a_{n-4}^{r_{n-4}} G_1^{'}$ and $v = x_1^{i'} y_1^{j'} \gamma_1^{k'} a_1^{r'_1} \cdots a_{n-4}^{r'_{n-4}} G_1^{'},$ we have
  \[
    s_1(u)s_1(v)s_1(uv)^{-1} = \gamma_1^{-p j i'}.
  \]
  Hence a representative of $[\mathrm{tra_1}(\chi)]$ is given by $\tra_1(\chi)(u, v)= \chi(z_1^{-ji'})$ for $\chi \in \Hom(G_1', \mathbb C^\times)$.
  Define a section $s_2:G_2/G_2' \rightarrow G_2$ by
  \[
    s_2(x_2^i y_2^j b_1^{r_1} \cdots b_{n-3}^{r_{n-3}}G_2' ) =  x_2^i y_2^j b_1^{r_1} \cdots b_{n-3}^{r_{n-3}}
  \]
  For $u=  x_2^i y_2^j b_1^{r_1} \cdots b_{n-3}^{r_{n-3}} G_2^{'}$ and $v = x_2^{i'} y_2^{j'} b_1^{r'_1} \cdots b_{n-3}^{r'_{n-3}} G_2^{'},$ we have
  \[
    s_2(u)s_2(v)s_2(uv)^{-1} = z_2^{-ji'}.
  \]
  Therefore, a representative of $[\mathrm{tra_2}(\chi)]$ is given by $\mathrm{tra_2}(\chi) (u,v) = \chi(z_2^{-ji'})$ for $\chi \in \Hom(G_2', \mathbb C^\times)$. This combined with the given isomorphisms $\delta$ and  $\sigma$ gives the commutativity of Figure~\ref{fig:commutative}. Now, the result follows as a direct consequence of Lemma~\ref{lem:C-twist-criterion}.

  For $(i)(b)$, proof is along the same lines as that of $(i)(a)$ with only difference that $Q_8 \times C_2^{(n-3)}$ has the following presentation:  
  \[
   \langle a, b, c, b_1, \cdots, b_{n-3} \ | \ a^4=1, a^2=b^2=c, b^{-1}ab = ca, b_i^{2} = 1 \ \forall \ 1 \leq i \leq (n-3) \rangle.
  \]
  We leave the rest of the details for the reader.

  (ii) We denote the groups $E(2) \times C_p^{(n-2m-2)}$, $H_m^1 \times C_p^{(n-2m-1)}$ and $H_m^2 \times C_p^{(n-2m-1)}$ by $G^m_{1}$, $G^m_{2}$ and $G^m_{3}$, respectively. Note that if $m \neq m'$, then for any $i,j \in \{ 1, 2,3\},$ the complex group algebras of $G_i^m$ and $G_j^{m'}$ are not isomorphic. Further, observe that for any $m \geq 2$, the order of $G_{1}^m$ is $p^n$ such that $n \geq 6.$ Therefore, for some $n \geq 6,$ if $G_i^{m}$ is $\mathbb{C}$-twist isomorphic to $G_j^{m'},$ then it implies that $m = m'$. Similarly, when $n = 5,$ a necessary condition for the $\mathbb{C}$-twist isomorphism of $G_1^{m}$ and $G_2^{m'}$ is that $ m = m'$. Therefore, now onwards we fix $m$ and prove the result.
  %and prove that $G_i^m$ are $\mathbb{C}$-twist isomorphic for all $i$. 

  The commutator subgroup of $G_i^m$ is central and is isomorphic to $C_{p};$ and $G_i^m/(G_i^m)' \cong C_p^{(n-1)}.$ Further, since for any $i \in \{1, 2, 3\},$ $\Ho^2(G_i^m, \mathbb C^\times) \cong  C_p^{ \frac{n^2-3n}{2} }$, we get the following short exact sequences:
  \[
    \xymatrix{
    1 \ar[r] &  \mathrm{Hom}(G_i', \mathbb C^\times) \ar[r] ^{\mathrm{tra}_i} & H^2(G_i/G_i', \mathbb C^\times) \ar[r]\ar[r]^{\mathrm{inf_i}} & H^2(G_i, \mathbb C^\times) \ar[r] &  1.
    }
  \]
  As in (i), the proof of $\mathbb C$-twist isomorphism follows by considering the image of the $\mathrm{tra_i}$ for $i \in \{1, 2, 3\}$ and by proving the commutativity of Figure~\ref{fig:commutative}.
  This is obtained by using the following presentation of groups $G_i^m.$
  \begin{eqnarray*}
    G_1^m & = &  <x_1, \cdots, x_m,y_1, \cdots y_m,z, \gamma, a_1, a_2, \cdots a_{n-2m-2} \mid [x_i,y_i] = z = \gamma^p, \\ & &  x_i^p = y_i^p = a_i^p = 1, \gamma^{p^2} = 1>. \\
    G_2^m &  = & <x_1, \cdots, x_m,y_1, \cdots y_m,z, a_1, a_2, \cdots, a_{n-2m-1} \mid [x_i,y_i] = z, \\ & & x_i^p = y_i^p = a_i^p = 1>. \\
    G_3^m & =  & <x_1, \cdots, x_m,y_1, \cdots y_m,z, a_1, a_2, \cdots, a_{n-2m-1} \mid [x_i,y_i] = z = x_m^p = y_m^p, \\ & & x_i^p = y_i^p = z^p = 1 (1 \leq i \leq m-1), a_i^p = 1>.
  \end{eqnarray*}

  Below we calculate $\mathrm{tra}_1$ explicitly and leave the details for $\tra_2$ and $\tra_3$ as those are similar. By the given presentation of $G_1^m$, we have $(G_1^m)' = <\gamma^p>$. Define a section $s:G_1^m/(G_1^m)' \rightarrow G_1^m$ by
  \[
    s(x_1^{i_1} \cdots x_m^{i_m} y_1^{j_1} \cdots y_m^{j_m} \gamma^k a_1^{r_1} \cdots a_{n-2m-2}^{r_{n-2m-2}} (G_1^m)' ) =  x_1^{i_1} \cdots x_m^{i_m} y_1^{j_1} \cdots y_m^{j_m} \gamma^k a_1^{r_1} \cdots a_{n-2m-2}^{r_{n-2m-2}}
  \]
  Note that for any two elements 
  $$u=  x_1^{i_1} \cdots x_m^{i_m} y_1^{j_1} \cdots y_m^{j_m} \gamma^k a_1^{r_1} \cdots a_{n-2m-2}^{r_{n-2m-2}}(G_1^m)^{'}, v = x_1^{i'_1} \cdots x_m^{i'_m} y_1^{j'_1} \cdots y_m^{j'_m} \gamma^{k'} a_1^{r'_1} \cdots a_{n-2m-2}^{r'_{n-2m-2}}(G_1^m)^{'}$$ of $G_1^m/(G_1^m)^{'},$ we have $   s(u)s(v)s(uv)^{-1} = \gamma^{-p \sum_{l=1}^mj_l i'_l}.$ Therefore, for any $\chi \in \mathrm{Hom}((G_1^m)', \mathbb C^\times)$, a representative of $[\mathrm{tra_1}(\chi)]$ is given by $\mathrm{tra_1}(\chi)(u,v) = \chi(z^{-\sum_{l=1}^mj_l i'_l}).$ By a similar computation of $\tra_2$ and $\tra_3$, we obtain that  for all $i \in \{1, 2, 3\},$ the groups $G_i^m$ pairwise satisfy the hypothesis of Lemma~\ref{lem:C-twist-criterion} and hence are $\mathbb C$-twist isomorphic.
\end{proof}

We now complete the details regarding the $\mathbb{C}$-twist isomorphism classes for $p$-groups with $s(G)=2$.

\begin{proof}[\bf{Proof of Theorem~\ref{thm:s(G)=2}}]

  % From Theorem \ref{thm:niroo-s(G)=2}, it is clear that every $p$-group with $s(G) = 2$ has order $p^n$ with $3 \leq n \leq 7$. 
  It follows from Theorem \ref{thm:niroo-s(G)=2} that for any fixed $p$, there exists only one $p$-group of order $p^3$ with $s(G) = 2$ and so it forms a singleton $\mathbb C$-twist class. We now consider cases when $n \geq 4.$
\vspace{.2cm}\\ 
$n=4$: Any group of order $p^4$ with $s(G) = 2$ is isomorphic to one of the following:
  \begin{itemize}
    \item $E(2)$
    \item $H_1^{2} \times C_p, p \neq 2$
    \item $Q_8 \times C_2$
    \item  $\langle a,b~|~a^4=1,b^4=1,[a,b,a]=[a,b,b]=1, [a,b]=a^2b^2 \rangle$ %(SmallGroup(16,3)),
    \item $ \langle a,b,c~|~a^2=b^2=c^2=1, abc=bca=cab \rangle$ %(SmallGroup(16,13))
    \item $\langle a,b~|~a^{p^2}=1,b^{p}=1, [a,b,a]=[a,b,b]=1 \rangle$
    \item  $\langle a,b~|~ a^9=b^3=1, [a,b,a] =1, [a,b,b]=a^6, [a,b,b,b]=1 \rangle $
    \item  $\langle a,b~|~ a^p=1, b^p=1, [a,b,a]=[a,b,b,a]=[a,b,b,b]=1 \rangle $ $(p \neq 3).$
  \end{itemize}

  Here the group $\langle a,b,c~|~a^{2}=b^{2}=c^{2} = 1,abc=bca=cab \rangle$ is isomorphic to $E(2).$ As mentioned in Theorem $4.3$ in \cite{MS}, any non-singleton $\mathbb{C}$-twist isomorphism class of groups of order $p^4$ consists of two groups, when $p=2;$ and of three groups, when $p$ is an odd prime.
  %; which establishes $(1)$ and $(2)$ of the above Theorem. 
  Thus comparing with the groups given in Table 3 and Table 4 in \cite{MS}, we obtain the following non-singleton $\mathbb C$-twist isomorphism classes of groups of order $p^4$ with generalized corank $2$:

  \begin{itemize} \item  $\mathbb{Q}_{8} \times C_{2} \sim_\mathbb{C} E(2),$ when $p=2$

    \item $E(2) \sim_\mathbb{C} H_1^1 \times C_p \sim_\mathbb{C} \langle a,b~|~a^{p^2}=1,b^{p}=1, [a,b,a]=[a,b,b]=1 \rangle$, when $p$ is odd.
  \end{itemize}
  Thus each of the remaining groups of order $p^4$ in the above list constitutes a $\mathbb C$-twist isomorphism class of size $1.$
  \vspace{.2cm}\\
  \textbf{$n \geq 5:$}
  %From Theorem \ref{thm:niroo-s(G)=2}, 
  Any group of order $p^n$ with $n \geq 5$ and $s(G) = 2$ is isomorphic to one of the following:

  \begin{itemize}
    \item  $E(2) \times C_p^{(n-2m-2)}$
    \item  $H_1^{2} \times C_p^{(n-3)}, p \neq 2$
    \item  $Q_8 \times C_2^{(n-3)}$
    \item $H_m^1 \times C_p^{(n-2m-1)}$
    \item $H_m^2 \times C_p^{(n-2m-1)}$
    \item $C_{p} \times (C_{p}^4 \rtimes_{\theta} C_{p}),~ p \neq 2.$
  \end{itemize}

  The derived subgroup of $C_{p} \times (C_{p}^4 \rtimes_{\theta} C_{p}),~p \neq 2$ is of order $p^2$; whereas the derived subgroup of the rest of the groups in the above list is of order $p$. Therefore, by comparing the complex group algebras,
  %of $C_{p} \times (C_{p}^4 \rtimes_{\theta} C_{p}),~ p \neq 2$ with the rest of the groups, 
  we obtain that for a fixed odd prime $p,$ the group $C_{p} \times (C_{p}^4 \rtimes_{\theta} C_{p})$ forms a singleton $\mathbb C$-twist isomorphism class. Finally, Proposition~\ref{prop1} completes the classification of the rest of the groups into $\mathbb C$-twist isomorphism classes.
\end{proof}

\section{$p$-groups with $s(G)=3$}
\label{sec:s(G) = 3}

\noindent In this section, we proceed with the determination of the $\mathbb{C}$-twist isomorphism classes of the $p$-groups with $s(G) = 3.$ The following result, using the notations of \cite{james}, gives a complete list of $p$-groups with $s(G) = 3$.

\begin{theorem} (\cite{Hatui}, Theorem 1.1) \label{sumana}
  Let $G$ be a finite non-abelian $p$-group of order $p^{n}$ with $s(G)=3$. Let $r_p$ be the smallest positive integer which is a non-quadratic residue mod $(p)$. 
  \begin{itemize}
    \item[(a)] For an odd prime $p$, $G$ is isomorphic to one of the following groups:

      \begin{itemize}
        \item [(i)] $\phi_{2}(22)= \langle \alpha,~\alpha_{1},~\alpha_{2}~|~[\alpha_{1},\alpha]=\alpha^{p}=\alpha_{2},~\alpha_{1}^{p^{2}}=\alpha_{2}^{p}=1 \rangle$

        \item [(ii)] $\phi_{3}(211)a= \langle \alpha, ~\alpha_{1},\alpha_{2},~\alpha_{3}~|~[\alpha_{1},\alpha]=\alpha_{2},~[\alpha_{2},\alpha] =\alpha^{p}=\alpha_{3}, \alpha_{1}^{(p)}=\alpha_{2}^{p}=\alpha_{3}^{p}=1\rangle $

        \item [(iii)] $\phi_{3}(211)b_{r} = \langle \alpha, ~\alpha_{1},\alpha_{2},\alpha_{3}~|~[\alpha_{1},\alpha]=\alpha_{2},~[\alpha_{2},\alpha]^r =\alpha_{1}^{(p)}=\alpha_{3}^r,~\alpha^{p}=\alpha_{2}^{p}=\alpha_{3}^{p}=1 \rangle,$ 
        where $r$ is either $1$ or $r_p.$

        \item [(iv)] $\phi_{2}(2111)c= \phi_{2}(211)c \times C_{p}$, where $\phi_{2}(211)c = \langle \alpha,\alpha_{1},\alpha_{2}~|~[\alpha_{1},\alpha]=\alpha_{2},~\alpha^{p^{2}}=\alpha_{1}^{p}=\alpha_{2}^{p}=1 \rangle$

        \item [(v)] $\phi_{2}(2111)d= ES_{p}(p^{3}) \times C_{p^{2}}$

        \item [(vi)] $\phi_{3}(1^{5}) = \phi_{3}(1^{4}) \times C_{p}$, where $\phi_{3}(1^{4})= \langle \alpha,  \alpha_{1},\alpha_{2},\alpha_{3}~|~[\alpha_{i},\alpha]=\alpha_{i+1},~\alpha^{p}=\alpha_{i}^{(p)}=\alpha_{3}^{p}=1~(i=1,2) \rangle$

        \item [(vii)] $\phi_{7}(1^{5})= \langle ~\alpha, ~\alpha_{1},\alpha_{2},\alpha_{3}, \beta ~|~ ~[\alpha_{i},\alpha]=\alpha_{i+1}, [\alpha_{1},\beta] =\alpha_{3},~\alpha^{p}=\alpha_{1}^{(p)}=\alpha_{i+1}^{p}=\beta^{p}=1~(i=1,2) \rangle$

        \item [(viii)] $\phi_{11}(1^{6}) =  \langle ~\alpha_{1}, ~\beta_{1},\alpha_{2},\beta_{2}, \alpha_{3}, \beta_{3}~|~~[\alpha_{1},\alpha_{2}]=\beta_{3},~[\alpha_{2},\alpha_{3}] =\beta_{1},~[\alpha_{3},\alpha_{1}] =\beta_{2},\alpha_{i}^{(p)}=\beta_{i}^{p}~(i=1,2,3) \rangle$

        \item [(ix)] $\phi_{12}(1^{6})=ES_{p}(p^{3}) \times ES_{p}(p^{3})$

        \item [(x)] $\phi_{13}(1^{6})  = \langle \alpha_{1},\alpha_{2},\alpha_{3}, \alpha_{4}, \beta_{1},\beta_{2} ~|~[\alpha_{1},\alpha_{i+1}]=\beta_{i},~[\alpha_{2},\alpha_{4}]= \beta_{2},~\alpha_{i}^{p}=\alpha_{3}^{p}=\alpha_{4}^{p}=\beta_{i}^{p}=1 (i=1,2)  \rangle$

        \item [(xi)]  $\phi_{15}(1^{6})= \langle \alpha_{1},\alpha_{2},\alpha_{3}, \alpha_{4}, \beta_{1},\beta_{2} ~|~[\alpha_{1},\alpha_{i+1}]=\beta_{i},~[\alpha_{3},\alpha_{4}]= \beta_{1},~[\alpha_{2},\alpha_{4}]= \beta_{2}^{g},~\alpha_{i}^{p}=\alpha_{3}^{p}=\alpha_{4}^{p}=\beta_{i}^{p}=1 (i=1,2)  \rangle$, where $g$ is the smallest positive integer which is a primitive root modulo $p$

        \item [(xii)] $(C_{p}^{(4)} \rtimes C_{p}) \times C_{p}^{2}$.
      \end{itemize}
    \item[]
    \item[(b)] For $p=2$, $G$ is isomorphic to one of the following groups:
    \item[]

      \begin{itemize}
        \item [(xiii)] $C_{2}^{4} \rtimes C_{2}$

        \item [(xiv)] $C_{2} \times ((C_{4}\times C_{2}) \rtimes C_{2})$

        \item [(xv)] $C_{4} \rtimes C_{4}$

        \item [(xvi)] $D_{16}$, the dihedral group of order 16.   \end{itemize}

  \end{itemize}
\end{theorem}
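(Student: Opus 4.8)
The plan is to take Hatui's classification (Theorem~\ref{sumana}) as the input list and to decide, for every pair of groups on it, whether they are $\mathbb C$-twist isomorphic, exactly as was done for $s(G)=2$. The organising principle is the chain of necessary conditions: if $G\sim_{\mathbb C}H$ then $|G|=|H|$, $\mathbb CG\cong\mathbb CH$, and $\Ho^2(G,\mathbb C^\times)\cong\Ho^2(H,\mathbb C^\times)$. So I would first partition the list by order $p^n$, and within each order record the invariants that are cheap to compute: the order of the derived subgroup $G'$ (equivalently the number $|G/G'|$ of linear characters), the full multiset of complex irreducible character degrees, and the Schur multiplier. Groups lying in different cells of this table cannot be $\sim_{\mathbb C}$, and once a cell is a singleton the corresponding group forms a singleton $\mathbb C$-twist class.

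The negative part of the statement --- that no non-singleton class other than (1) and (2) occurs --- should fall out from these invariants, with the character degrees doing most of the work. For odd $p$: at order $p^4$ the group $\phi_2(22)$ is separated from the $\phi_3(211)$ family by $|G'|$; at order $p^5$ the pair $\phi_2(2111)c,d$ (each with $G'\cong C_p$, hence $p^4$ linear characters) is separated from $\phi_3(1^5),\phi_7(1^5)$ (each with $|G'|=p^2$, hence $p^3$ linear characters), while $\phi_3(1^5)$ and $\phi_7(1^5)$ are separated from each other by their maximal irreducible character degree, which is $p$ for $\phi_3(1^5)=\phi_3(1^4)\times C_p$ but $p^2$ for $\phi_7(1^5)$ (the latter having a degree-$p^2$ character induced from an index-$p$ subgroup). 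At order $p^6$ the group $\phi_{11}(1^6)$ is separated by $|G'|=p^3$, and $\phi_{12}(1^6),\phi_{13}(1^6),\phi_{15}(1^6)$ are separated from one another by the \emph{number} of degree-$p^2$ characters, which I would compute as the number of central characters $\lambda$ for which the induced alternating form $\lambda\circ[\,\cdot\,,\,\cdot\,]$ on $G/G'$ is non-degenerate; a Pfaffian calculation gives $(p-1)^2$, $p^2-p$ and $p^2-1$ respectively, which are pairwise distinct. The $p=2$ groups (xiii)--(xvi) are handled identically, separating the two order-$2^4$ groups and the two order-$2^5$ groups by linear-character count and by character degrees.

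For the positive assertion (1), all three of $\phi_3(211)a$, $\phi_3(211)b_1$, $\phi_3(211)b_{r_p}$ have order $p^4$, so I would invoke Lemma~\ref{centraltype} applied to each pair. Conditions (1) and (2) are immediate: each is a maximal-class group of order $p^4$ with $|G'|=p^2$, hence the same degree multiset $\{1^{(p^2)},p^{(p^2-1)}\}$ and thus isomorphic complex group algebras, and each has $|\Ho^2(G,\mathbb C^\times)|=p$, so Schur multiplier $\cong C_p$. The remaining work is condition (3): that none of the three is of central type. Here I would show that no cocycle is non-degenerate by verifying that each twisted algebra $\mathbb C^\alpha G$ retains a non-trivial centre --- exhibiting, for each class, a non-identity $\alpha$-regular element inside $Z(G)\cap G'$ --- so $\mathbb C^\alpha G$ is never simple of dimension $p^4$; equivalently one checks against the known list of central-type groups of order $p^4$, none of which is of maximal class.

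The main obstacle is the positive assertion (2), $\phi_2(2111)c\sim_{\mathbb C}\phi_2(2111)d$. Both groups have $G'\cong C_p$ central and the \emph{same} abelianisation $Q:=G/G'\cong C_{p^2}\times C_p^{2}$, yet their commutator forms on $Q$ have inequivalent radicals ($C_p^2$ versus $C_{p^2}$); moreover $|\Ho^2(G,\mathbb C^\times)|=p^4$ while $|\Ho^2(Q,\mathbb C^\times)|=p^3$, so inflation is \emph{not} surjective and Lemma~\ref{lem:C-twist-criterion} does not apply. Instead I would use that each $G_i$ is a central extension of the common group $Q$ by $C_p$ to decompose, for \emph{every} cohomology class, $\mathbb C^\alpha G_i\cong\prod_{\mu\in\widehat{G_i'}}\mathbb C^{\beta_{i}(\alpha,\mu)}Q$, where each factor is a twisted algebra of the abelian group $Q$ whose isomorphism type is governed by the rank of the alternating form $\beta_i(\alpha,\mu)$ on $Q$. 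The $\mathbb C$-twist isomorphism then amounts to producing a group isomorphism $\psi:\Ho^2(G_1,\mathbb C^\times)\to\Ho^2(G_2,\mathbb C^\times)$ for which, for every class $[\alpha]$, the multiset $\{\mathrm{rank}\,\beta_1(\alpha,\mu)\}_\mu$ equals $\{\mathrm{rank}\,\beta_2(\psi\alpha,\mu)\}_\mu$. The delicate point is that, because the two commutator forms are inequivalent under $\mathrm{Aut}(Q)$, no such $\psi$ is induced by a map of groups; one must instead check that the two extensions distribute the form-ranks over the Schur multiplier in the same way and build $\psi$ by hand from that bookkeeping. I expect this matching --- carried out by listing the ranks attained along each coset of the image of the transgression and verifying that the two tallies coincide --- to be the technical heart of the argument.
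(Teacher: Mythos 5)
There is a fundamental mismatch: the statement you were asked to prove is Theorem~\ref{sumana} itself, i.e.\ Hatui's classification of all non-abelian $p$-groups $G$ of order $p^n$ with $s(G)=3$ (equivalently, with $|\Ho^2(G,\mathbb C^\times)|=p^{\frac{(n-1)(n-2)}{2}-2}$). Your proposal opens by ``taking Hatui's classification (Theorem~\ref{sumana}) as the input list,'' which is exactly the statement under review, and then proceeds to prove the downstream result, namely the determination of the $\mathbb C$-twist isomorphism classes (the paper's Theorem~\ref{thm:s(G)=3}). As a proof of the statement in question this is circular: nothing in your argument shows that an arbitrary group with generalized corank $3$ must appear on the list (the completeness direction), nor that each listed group actually has Schur multiplier of the prescribed order (the converse direction, which requires computing $\Ho^2$ for each of the sixteen candidates). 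A genuine proof would proceed along entirely different lines --- bounds on Schur multipliers of $p$-groups in terms of $|G'|$ and $G/Z(G)$ in the style of Green and Niroomand, reduction to small $n$ (here $4\le n\le 7$), and a case analysis through the James classification of groups of order at most $p^6$ --- none of which appears in your proposal. Note that the paper itself supplies no proof either: it quotes the result from \cite{Hatui}, so the honest options were to cite that proof or to reconstruct its ingredients, not to assume it.

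As a secondary remark: read instead as a blind proof of Theorem~\ref{thm:s(G)=3}, your outline tracks Section~\ref{sec:s(G) = 3} of the paper quite closely (partition by invariants, Lemma~\ref{centraltype} for the order-$p^4$ class, the pair $\phi_2(2111)c\sim_{\mathbb C}\phi_2(2111)d$ as the technical heart), and you correctly observe that Lemma~\ref{lem:C-twist-criterion} fails there because inflation from $\Ho^2(G/G',\mathbb C^\times)$ is not surjective. But your proposed substitute --- decomposing $\mathbb C^\alpha G_i$ over $\widehat{G_i'}$ into twisted algebras of $Q=G_i/G_i'$ for \emph{every} class $[\alpha]$ --- is precisely where the argument would need care: such a block decomposition requires the central elements of $G_i'$ to be $\alpha$-regular, which is not automatic for the classes outside the image of inflation, and you leave the resulting rank bookkeeping as an expectation rather than a verification. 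The paper avoids this entirely by passing to representation groups $\widetilde{H_1},\widetilde{H_2}$ (Lemma~\ref{l2}), using that the transgression is there an isomorphism, and exhibiting a dimension-preserving bijection of the fibers $\mathrm{Irr}(\widetilde{H_i}\mid\chi)$ via explicit Clifford-theoretic stabilizer computations (Lemma~\ref{l1}, Proposition~\ref{p1}) --- an ordinary-character computation that sidesteps cocycle-level analysis altogether.
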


As mentioned earlier, the $\mathbb{C}$-twist isomorphism classes of the groups of order $p^4$ were described by Margolis-Schnabel~\cite{MS}. We now consider the groups of order $p^5$ with $s(G) = 3$.  Let $H_1$ and $H_2$ denote the groups $\phi_2(2111)d $ and $\phi_2(2111)c,$ respectively. We proceed to prove that $H_1$ and $H_2$ are $\mathbb{C}$-twist isomorphic.  We use the following general result to prove this. We refer the reader to 
\cite[Chapter~3]{MR788161} for results regarding the existence and construction of the representation group of a finite group.

\begin{lemma}\label{l1}
  Let $G_1$ and $G_2$ be two finite groups with $\widetilde{G_1}$ and $\widetilde{G_2}$ as their representation groups respectively. Let $A_1, A_2$ be central subgroups of $\widetilde{G_1}$ and $\widetilde{G_2}$ respectively such that $\widetilde{G_i}/A_i \cong G_i$ and the transgression maps $\mathrm{tra_i}: \mathrm{Hom}(A_i , \mathbb C^\times) \rightarrow \Ho^2(G_i, \mathbb C^\times)$ are isomorphisms. Let $\sigma: \mathrm{Hom}(A_1, \mathbb C^\times) \rightarrow \mathrm{Hom}(A_2, \mathbb C^\times)$ be an isomorphism such that the following sets are in a dimension preserving bijection for every $\chi \in \mathrm{Hom}(A_1, \mathbb C^\times)$:
  $$
    \mathrm{Irr}(\widetilde{G_1} \mid \chi) \leftrightarrow \mathrm{Irr}(\widetilde{G_2} \mid \sigma(\chi)).
  $$
  Then $G_1$ and $G_2$ are $\mathbb C$-twist isomorphic.
\end{lemma}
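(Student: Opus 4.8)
The plan is to use the representation groups to linearize projective representations, thereby reducing the $\mathbb{C}$-twist isomorphism to the comparison of ordinary representations with a fixed central character. Recall from Schur's theory of representation groups (as developed in \cite[Chapter~3]{MR788161}) that, for the central extension $1 \to A_i \to \widetilde{G_i} \to G_i \to 1$ with transgression isomorphism $\mathrm{tra}_i \colon \mathrm{Hom}(A_i, \mathbb{C}^\times) \to \Ho^2(G_i, \mathbb{C}^\times)$, every irreducible representation of $\widetilde{G_i}$ restricts on the central subgroup $A_i$ to a scalar multiple of a single character $\chi \in \mathrm{Hom}(A_i, \mathbb{C}^\times)$. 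Moreover, for each fixed $\chi$ the set $\mathrm{Irr}(\widetilde{G_i} \mid \chi)$ is in a dimension-preserving bijection with the irreducible $\alpha$-projective representations of $G_i$, where $[\alpha] = \mathrm{tra}_i(\chi)$. Since the twisted group algebra $\mathbb{C}^\alpha G_i$ is semisimple over $\mathbb{C}$, its Wedderburn decomposition reads $\mathbb{C}^\alpha G_i \cong \prod_{\rho \in \mathrm{Irr}(\widetilde{G_i} \mid \chi)} M_{\dim \rho}(\mathbb{C})$, so the $\mathbb{C}$-algebra isomorphism type of $\mathbb{C}^\alpha G_i$ is determined precisely by the multiset of dimensions $\{\dim \rho : \rho \in \mathrm{Irr}(\widetilde{G_i} \mid \chi)\}$.

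First I would define the candidate isomorphism of Schur multipliers by $\psi := \mathrm{tra}_2 \circ \sigma \circ \mathrm{tra}_1^{-1} \colon \Ho^2(G_1, \mathbb{C}^\times) \to \Ho^2(G_2, \mathbb{C}^\times)$, which is an isomorphism as a composite of the three given isomorphisms $\mathrm{tra}_1^{-1}$, $\sigma$, and $\mathrm{tra}_2$. Given an arbitrary class $[\alpha] \in \Ho^2(G_1, \mathbb{C}^\times)$, put $\chi := \mathrm{tra}_1^{-1}([\alpha]) \in \mathrm{Hom}(A_1, \mathbb{C}^\times)$; then by construction $\psi([\alpha]) = \mathrm{tra}_2(\sigma(\chi))$, so the character on $A_2$ associated with $\psi([\alpha])$ under $\mathrm{tra}_2^{-1}$ is exactly $\sigma(\chi)$. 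Because $\mathrm{tra}_1$ is a bijection, as $[\alpha]$ ranges over all of $\Ho^2(G_1, \mathbb{C}^\times)$ the character $\chi$ ranges over all of $\mathrm{Hom}(A_1, \mathbb{C}^\times)$, so the hypothesis is available for every class $[\alpha]$.

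Next I would combine the hypothesis with the structural observation. For the $\chi$ attached to $[\alpha]$, the assumption furnishes a dimension-preserving bijection $\mathrm{Irr}(\widetilde{G_1} \mid \chi) \leftrightarrow \mathrm{Irr}(\widetilde{G_2} \mid \sigma(\chi))$, whence the two multisets of dimensions coincide. Feeding these equal multisets into the Wedderburn decompositions above shows that $\mathbb{C}^\alpha G_1$ and $\mathbb{C}^{\psi(\alpha)} G_2$ are products of matrix algebras indexed by identical multisets of sizes, hence isomorphic as $\mathbb{C}$-algebras. Since this holds for every $[\alpha]$ and $\psi$ is an isomorphism of Schur multipliers, the defining conditions of the relation $\sim_{\mathbb{C}}$ are satisfied, giving $G_1 \sim_{\mathbb{C}} G_2$.

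Most of the argument is bookkeeping, and the only genuinely substantive ingredient is the dimension-preserving correspondence between $\mathrm{Irr}(\widetilde{G_i} \mid \chi)$ and the irreducible $\alpha$-projective representations of $G_i$; this is exactly the content of Schur's lifting theorem for representation groups, and it is the step where I would be most careful to invoke the precise statement from \cite{MR788161}. Everything else rests on the elementary fact that a finite-dimensional semisimple $\mathbb{C}$-algebra is determined up to isomorphism by the multiset of dimensions of its simple modules, so no further obstacle is anticipated.
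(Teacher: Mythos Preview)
Your proposal is correct and follows essentially the same approach as the paper: both define the Schur multiplier isomorphism as $\mathrm{tra}_2 \circ \sigma \circ \mathrm{tra}_1^{-1}$ (the paper writes it as $\alpha_\chi \mapsto \alpha_{\sigma(\chi)}$) and use the dimension-preserving bijection of $\mathrm{Irr}(\widetilde{G_i}\mid\chi)$ to conclude $\mathbb{C}^{\alpha_\chi}G_1 \cong \mathbb{C}^{\alpha_{\sigma(\chi)}}G_2$. Your version simply makes explicit the two ingredients the paper leaves implicit, namely the Schur lifting correspondence between $\mathrm{Irr}(\widetilde{G_i}\mid\chi)$ and irreducible $\mathrm{tra}_i(\chi)$-projective representations of $G_i$, and the Wedderburn argument that equal dimension multisets force isomorphic twisted group algebras.
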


\begin{proof}
For $\chi \in \Hom (A_1, \mathbb C^\times)$, we denote $\tra_1(\chi)\in \Ho^2(G_1, \mathbb C^\times)$  by $\alpha_\chi$. We note that $$
    \mathrm{Irr}(\widetilde{G_1} \mid \chi) \leftrightarrow \mathrm{Irr}(\widetilde{G_2} \mid \sigma(\chi)).
  $$  gives
  $$
    \mathbb C^{\alpha_\chi}[ G_1]  \cong \mathbb C^{\alpha_{\sigma(\chi)}}[ G_2 ].
  $$
  Therefore, the map $\alpha_\chi \mapsto \alpha_{\sigma(\chi)}$ between $\Ho^2(G_1, \mathbb C^\times)$ and $\Ho^2(G_2, \mathbb C^\times)$ gives the required $\mathbb C$-twist isomorphism between $G_1$ and $G_2$.
\end{proof}

Note that $H_1 = E_1 \times C_{p^{2}}$ and $H_2 = \langle \alpha, \alpha_1, \alpha_2 \mid [\alpha_1, \alpha ] = \alpha_2, \alpha^{p^2} = \alpha_1^p = \alpha_2^p = 1 \rangle \times \langle \alpha_3 \rangle$. Define the following groups:
\begin{eqnarray*}
  \widetilde{H_1} &  =  &  \langle  \alpha_1, \alpha_2, \alpha_3, \alpha_4, x,y,z,\alpha \mid [x,y] = z, [x,z] = \alpha_1, [y,z] = \alpha_2, \\ & &    [x,\alpha] = \alpha_3,  [y,\alpha] = \alpha_4, x^p = y^p = z^p = \alpha_{i}^{p}=\alpha^{p^2} = 1 \rangle.
\end{eqnarray*}
and
\begin{eqnarray*}
  \widetilde{H_2} & = &   \langle x,y,z,w, \alpha, \alpha_1, \alpha_2,\alpha_{3} \mid [\alpha_1, \alpha] = \alpha_2, [\alpha_1, \alpha_2] = x, [\alpha, \alpha_2]   = y, \\ & &  [\alpha_3, \alpha_{1}] = z, [\alpha_3,\alpha] = w,
  \alpha^{p^{2}}=\alpha_i^p = x^{p}=y^{p}=z^{p}=w^{p} = 1 \rangle.
\end{eqnarray*}

The following lemma plays a very crucial role towards this end:
\begin{lemma} \label{l2}
  The groups $\widetilde{H_1}$ and $\widetilde{H_2}$ are representations groups of $H_1$ and $H_2$, respectively.
\end{lemma}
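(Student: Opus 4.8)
The plan is to verify, for each $i\in\{1,2\}$, that $\widetilde{H_i}$ satisfies the defining properties of a representation group of $H_i$: namely that there is a central subgroup $A_i\le\widetilde{H_i}$ with $A_i\subseteq[\widetilde{H_i},\widetilde{H_i}]\cap Z(\widetilde{H_i})$, $A_i\cong \Ho^2(H_i,\mathbb{C}^\times)$ (the Schur multiplier), and $\widetilde{H_i}/A_i\cong H_i$. Since $H_1$ and $H_2$ both have $s(G)=3$ and order $p^5$, the order of the Schur multiplier is $|\Ho^2(H_i,\mathbb{C}^\times)|=p^{\frac{(n-1)(n-2)}{2}+1-s}=p^{6+1-3}=p^4$, so I expect $A_i$ to be a central elementary abelian subgroup of rank $4$. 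First I would identify the candidate $A_i$: in $\widetilde{H_1}$ take $A_1=\langle\alpha_1,\alpha_2,\alpha_3,\alpha_4\rangle$, and in $\widetilde{H_2}$ take $A_2=\langle x,y,z,w\rangle$, both of which are visibly the ``new'' generators adjoined in the commutator relations.

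The key verification steps, carried out for each $\widetilde{H_i}$ separately, are as follows. First I would check that $A_i$ is central and elementary abelian of order $p^4$: centrality follows because each generator of $A_i$ is defined as a commutator whose further commutators with all generators are trivial by the relations (all the $\alpha_j^p=1$, $x^p=y^p=z^p=w^p=1$ force exponent $p$), and one must confirm no hidden collapse reduces the order below $p^4$. Second, I would verify $A_i\subseteq[\widetilde{H_i},\widetilde{H_i}]$ directly from the relations, since each generator of $A_i$ literally equals a commutator. Third, I would compute the quotient $\widetilde{H_i}/A_i$ and check it is isomorphic to $H_i$: killing $A_1$ in $\widetilde{H_1}$ makes $x,y,z,\alpha$ pairwise commute except for $[x,y]=z$, recovering (up to the direct factor carried by $\alpha$) the structure $E_1\times C_{p^2}=H_1$; killing $A_2$ in $\widetilde{H_2}$ leaves exactly the relations $[\alpha_1,\alpha]=\alpha_2$ with everything else central, recovering $H_2$. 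Finally, I would confirm the order bookkeeping: $|\widetilde{H_i}|=|A_i|\cdot|H_i|=p^4\cdot p^5=p^9$, which must match a direct count of the generators and relations, thereby certifying that $A_i$ has full rank $4$ and no relations were lost.

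The cleanest way to assemble these into the representation-group property is to invoke the standard characterization (see \cite[Chapter~3]{MR788161}): a central extension $1\to A_i\to\widetilde{H_i}\to H_i\to 1$ with $A_i\subseteq Z(\widetilde{H_i})\cap[\widetilde{H_i},\widetilde{H_i}]$ and $|A_i|=|\Ho^2(H_i,\mathbb{C}^\times)|$ is a representation (stem) cover precisely when the transgression $\tra_i:\Hom(A_i,\mathbb{C}^\times)\to\Ho^2(H_i,\mathbb{C}^\times)$ is an isomorphism; equivalently, when the extension is a stem extension of the correct order. Because we already know $|A_i|=|\Ho^2(H_i,\mathbb{C}^\times)|=p^4$, once the stem conditions $A_i\subseteq Z(\widetilde{H_i})\cap[\widetilde{H_i},\widetilde{H_i}]$ are established the isomorphism of the transgression follows automatically from the universal-coefficient/Hopf-formula count.

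The main obstacle I anticipate is the order computation, that is, rigorously ruling out any collapse in $\widetilde{H_i}$ that would make $|A_i|<p^4$ or introduce unexpected identifications among the intended $p^9$ elements. The presentations are $p$-group presentations given purely by commutator and power relations, so the danger is a Hall–Witt or Jacobi-type consequence forcing a relation such as a nontrivial product of the $A_i$-generators to be trivial. To handle this I would either exhibit an explicit nilpotent (class $\le 3$) normal form and count cosets, or construct an explicit matrix or polycyclic realization of $\widetilde{H_i}$ of order exactly $p^9$; producing such a faithful model simultaneously confirms centrality of $A_i$ and the exact order, closing the argument.
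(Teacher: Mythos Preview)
Your proposal is correct and follows essentially the same approach as the paper: identify the central subgroup $A_i$ (the paper calls it $K_1$, the kernel of the obvious projection $\widetilde{H_1}\twoheadrightarrow H_1$), verify that $A_i\subseteq Z(\widetilde{H_i})\cap[\widetilde{H_i},\widetilde{H_i}]$ and $|A_i|=|\Ho^2(H_i,\mathbb C^\times)|=p^4$, and then invoke the stem-cover criterion from \cite[Chapter~3, Theorem~3.7]{MR788161}. The paper simply asserts the order equality without further justification, whereas you correctly flag the need to rule out collapse in the presentations; otherwise the arguments are identical.
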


\begin{proof} Here, we give a proof for $H_1$. The proof for $H_2$ is along the same lines so we omit that part. The group $H_1$ has the following presentation:
  \[
    H_1 = \langle x,y,z, \alpha \mid [x,y] = z, x^p = y^p = \alpha^{p^2} = 1 \rangle.
  \]
  Consider the projection map from $\widetilde{H_1} $ onto $H_1$  obtained by mapping $x,y,z,\alpha $ to $x,y,z,\alpha$ respectively and all $\alpha_i$ to $1$. Let $K_1$ be the kernel of this projection map. Then $|K_1| = |\Ho^2(H_1, \mathbb C^\times)| = p^4$ and $K_1 \subseteq Z(\widetilde{H_1}) \cap [\widetilde{H_1}, \widetilde{H_1}]$. Therefore, by \cite[Theorem~3.7 (Chapter 3)]{MR788161}, $\widetilde{H_1}$ is a representation group of $H_1$.
\end{proof}

\begin{proposition}\label{p1} The groups $\widetilde{H_1}$ and $\widetilde{H_2}$ satisfy the following:
  \[
    \mathbb{C}[\widetilde{H_1}] \cong  \mathbb{C}[\widetilde{H_2}] \cong \mathbb C^{\oplus p^4 } \oplus (\mathrm{M}_p(\mathbb C))^{\oplus (p^3+4p^2-p+1)p^2(p-1)} \oplus (\mathrm{M}_{p^2}(\mathbb C))^{\oplus p^3(p-1)^3(p+2)}.
  \]
  Furthermore, for the subgroups $A = \langle \alpha_1, \alpha_2, \alpha_3, \alpha_4 \rangle$ and $B = \langle x,y,z,w \rangle$ of $\widetilde{H_1}$ and $\widetilde{H_2}$ respectively, there exists an isomorphism $\sigma: \widehat{A} \rightarrow \widehat{B}$ such that the following sets are in a dimension preserving bijection for every $\chi \in \hat{A}$:
  $$
    \mathrm{Irr}(\widetilde{H_1} \mid \chi) \leftrightarrow \mathrm{Irr}(\widetilde{H_2} \mid \sigma(\chi)).
  $$

\end{proposition}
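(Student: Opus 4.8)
The plan is to deduce both assertions at once from a single group isomorphism $\sigma\colon\widehat{A}\to\widehat{B}$ that matches Wedderburn blocks. Since $A\le Z(\widetilde{H_1})$ and $B\le Z(\widetilde{H_2})$, the central idempotents attached to characters of $A$ (resp.\ $B$) split the group algebras as
\[
  \mathbb{C}[\widetilde{H_1}]\cong\bigoplus_{\chi\in\widehat{A}}\ \bigoplus_{\rho\in\irr(\widetilde{H_1}\mid\chi)}\mathrm{M}_{\dim\rho}(\mathbb{C}),
\]
and likewise for $\widetilde{H_2}$. Hence it suffices to produce a group isomorphism $\sigma$ for which, for every $\chi\in\widehat{A}$, the multiset of dimensions occurring in $\irr(\widetilde{H_1}\mid\chi)$ equals that occurring in $\irr(\widetilde{H_2}\mid\sigma(\chi))$. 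Summing the matched blocks then yields simultaneously the dimension-preserving bijection, the algebra isomorphism, and the explicit decomposition displayed above (and, through Lemma~\ref{l1} together with Lemma~\ref{l2}, the relation $H_1\sim_{\mathbb{C}}H_2$).

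To compute $\irr(\widetilde{H_1}\mid\chi)$ I would first refine by the full centre $Z(\widetilde{H_1})=\langle A,\alpha^p\rangle$: one has $\irr(\widetilde{H_1}\mid\chi)=\bigsqcup_{\omega}\irr(\widetilde{H_1}\mid\lambda)$, the union over the $p$ central characters $\lambda$ restricting to $\chi$ on $A$, indexed by $\omega=\lambda(\alpha^p)$. For fixed $\lambda$ I apply Clifford theory (Theorem~\ref{thm:clifford}) along the chain $A\le M_1\le\widetilde{H_1}$, where $M_1=\langle z,\alpha\rangle A$ is abelian and normal with $\widetilde{H_1}/M_1\cong C_p^2=\langle\bar x,\bar y\rangle$. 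The conjugation action of $\widetilde{H_1}/M_1$ on the characters of $M_1$ lying over $\lambda$ is governed by the matrix $L(\chi)\in\mathrm{M}_2(\mathbb{F}_p)$ whose two rows record $\chi$ on $([x,z],[y,z])=(\alpha_1,\alpha_2)$ and on $([x,\alpha],[y,\alpha])=(\alpha_3,\alpha_4)$; the orbits then have size $p^{\,r}$ with $r=\mathrm{rank}\,L(\chi)$. On each stabiliser the residual projective representation is controlled by the base commutator $[x,y]=z$, so (using the symplectic description of cocycles of abelian groups, Prop.~1.3 of \cite{MR788161}) the associated alternating form is non-degenerate precisely when the extending character is non-trivial on $z$. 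Thus the irreducible dimensions are $p^{\,r}$ when $r\ge 1$, and for $r=0$ they are $1$ or $p$ according as that character is trivial or not on $z$; the multiplicities follow from the orbit–stabiliser counts.

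Stratifying $\widehat{A}$ by $r\in\{0,1,2\}$ now gives, per $\chi$: for $r=2$ one irreducible of dimension $p^2$ for each of the $p$ values of $\omega$; for $r=1$ a total of $p^{3}$ irreducibles of dimension $p$; and for the unique $\chi=1$ (so $r=0$) the $p^{4}$ linear characters together with $p^2(p-1)$ irreducibles of dimension $p$. Running the identical argument for $\widetilde{H_2}$ with $B=\langle x,y,z,w\rangle$, the abelian normal subgroup $M_2=\langle\alpha_2,\alpha_3\rangle B$, base commutator $[\alpha_1,\alpha]=\alpha_2$, and matrix $L'(\chi')$ whose rows record $\chi'$ on $([\alpha_1,\alpha_2],[\alpha,\alpha_2])=(x,y)$ and on $([\alpha_3,\alpha_1],[\alpha_3,\alpha])=(z,w)$ produces the same stratum data. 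Taking $\sigma$ to be the dual of the identification $\alpha_1\mapsto x,\ \alpha_2\mapsto y,\ \alpha_3\mapsto z,\ \alpha_4\mapsto w$ carries $L(\chi)$ to $L'(\sigma(\chi))$, hence preserves the rank and with it the entire multiset of dimensions; summing the matched strata reproduces the stated Wedderburn decomposition.

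The main obstacle is that $\widetilde{H_1}$ and $\widetilde{H_2}$ have nilpotency class $3$, so $[\widetilde{H_i},\widetilde{H_i}]\not\subseteq Z(\widetilde{H_i})$ and there is no single alternating form on $\widetilde{H_i}/Z(\widetilde{H_i})$ that controls all the dimensions; one is genuinely forced into the two-step Clifford analysis, and the residual form on each stabiliser must be evaluated taking the class-$3$ correction terms into account, which is exactly what makes the cases "trivial/non-trivial on the base commutator" separate. The delicate point is then to confirm that the two groups yield identical stratum counts and, crucially, that $\sigma$ can be taken to be an honest isomorphism of the character groups respecting these strata rather than a mere bijection; this is what the explicit identification above secures, since it manifestly preserves the rank invariant $r$ that determines all the dimension data.
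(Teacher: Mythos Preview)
Your approach is essentially the paper's own: both run Clifford theory along the index-$p^2$ abelian normal subgroup $N=\langle z,\alpha\rangle A$ (resp.\ $M=\langle\alpha_2,\alpha_3,\alpha^p\rangle B$) and compute inertia groups of its characters. Your packaging via the rank $r=\mathrm{rank}\,L(\chi)$ of the $2\times2$ matrix over $\mathbb{F}_p$ is just a clean reorganisation of the paper's case split on how many of $i_1,\dots,i_4$ vanish, and your per-$\chi$ counts (one $p^2$-dimensional for each $\omega$ when $r=2$; $p^3$ of dimension $p$ when $r=1$; and $p^4$ linear plus $p^2(p-1)$ of dimension $p$ when $r=0$) are correct. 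One small slip: your $M_2=\langle\alpha_2,\alpha_3\rangle B$ omits $\alpha^p$, so $\widetilde{H_2}/M_2\cong C_{p^2}\times C_p$, not $C_p^2$; you want $M_2\langle\alpha^p\rangle$, which is exactly the paper's $M$.

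There is, however, a real discrepancy between your outline and the paper that you should not gloss over. Summing your strata gives
\[
\bigl|\{\chi\in\widehat{A}:r=2\}\bigr|\cdot p \;=\; |GL_2(\mathbb{F}_p)|\cdot p \;=\; p^2(p-1)^2(p+1)
\]
irreducibles of dimension $p^2$, whereas the paper's proof records $p(p-1)^3(p+2)$ (and the statement of the proposition has $p^3(p-1)^3(p+2)$, which cannot be right since the total dimension would exceed $p^9$). The mismatch comes from Case~III in the paper: among the six ways to have exactly two of the $i_t$ nonzero, the subcases $\{i_1,i_4\neq0,\ i_2=i_3=0\}$ and $\{i_2,i_3\neq0,\ i_1=i_4=0\}$ give an invertible $L(\chi)$ and hence inertia group $N$, not $|I|=p^8$ as asserted there. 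So your rank computation is the correct one, and the sentence ``summing the matched strata reproduces the stated Wedderburn decomposition'' is false as written --- you should state and use your own multiplicities. This does not affect the main conclusion: since the identical rank analysis applies to $\widetilde{H_2}$ and your $\sigma$ visibly preserves $r$, the isomorphism $\mathbb{C}[\widetilde{H_1}]\cong\mathbb{C}[\widetilde{H_2}]$ and the dimension-preserving bijection both go through.
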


\begin{proof}
  {\bf Representations of $\widetilde{H_1}$:} We first justify the representations of $\widetilde{H_1}$. By the definition of $\widetilde{H_1}$, the derived subgroup of $\widetilde{H_{1}}$ (denoted $\widetilde{H_{1}}'$) is $\langle \alpha_{1},\alpha_{2},\alpha_{3},\alpha_{4}, z\rangle$ and the center of $\widetilde{H_{1}}$ is $\langle \alpha_{1},\alpha_{2},\alpha_{3},\alpha_{4}, \alpha^p \rangle$. By considering the quotient group $\widetilde{H_{1}}$/$\widetilde{H_{1}}'$, we obtain that $\widetilde{H_1}$ has exactly $p^{4}$ number of one-dimensional representations.

  We next consider the abelian normal subgroup $N = \langle \alpha_{1},\alpha_{2},\alpha_{3},\alpha_{4},\alpha,z \rangle$ of $\widetilde{H_{1}}$. The group $N$ has order $p^7$. Hence every irreducible representation of $\widetilde{H_1}$ has dimension either $1$, $p$ or $p^2$. We have already justified all one-dimensional representations. Our next goal is to determine all $p$ and $p^2$ dimensional representations.

  Let $\chi \in \operatorname{Irr}(N)$ such that $\chi(\alpha_{1})= \xi^{i_{1}}$, $\chi(\alpha_{2})= \xi^{i_{2}}$, $\chi(\alpha_{3})= \xi^{i_{3}}$ and $\chi(\alpha_{4})= \xi^{i_{4}}$, where $\xi$ is a primitive $p^{th}$ root of unity and $0 \leq i_{1},i_{2},i_{3},i_{4} \leq (p-1)$. We determine the stabilizer of $\chi$ in $\widetilde{H_1}$, denoted by $I_{\wH1}(\chi).$ Recall
  $I_{\wH1}(\chi) = \{g \in \wH1 \mid \chi^g = \chi\}.$
  By definition of $\chi$, $N \leq I_{\wH1}(\chi)$. Next, consider $g = x^{i}y^{j}n$, with $n \in N$.
  Recall that every $m \in N$ satisfes $m = \alpha^{e}z^{f}h$ for some $h \in Z(\widetilde{H_1}).$ Therefore, we have $$\chi^{x^{i}y^{j}}(\alpha^{e}z^{f}h) = \chi(\alpha^{e}z^{f}h)$$ if and only if $\chi(x^{i}y^{j}\alpha^{e}z^{f} y^{-j}x^{-i}) =
    \chi(\alpha^{e}z^{f}).$ Since

  \begin{eqnarray*}
    x^{i}(y^{j}\alpha^{e})z^{f} y^{-j}x^{-i} & = &
    x^{i}(\alpha^e y^{j}\alpha_4^{je})z^{f}y^{-j}x^{-i} =
    \alpha^e x^{i} \alpha_3^{ie}y^{j}\alpha_4^{je}z^{f}y^{-j}x^{-i} \\ & = &
    \alpha^e x^{i} y^{j}z^{f}y^{-j}x^{-i}\alpha_3^{ie}\alpha_4^{je} = \alpha^e x^{i} z^f y^{j} \alpha_2^{jf}y^{-j}x^{-i}\alpha_3^{ie}\alpha_4^{je} \\ & = & \alpha^e z^{f} \alpha_1^{if} \alpha_2^{jf} \alpha_3^{ie}\alpha_4^{je},
  \end{eqnarray*}

  we obtain that $x^i y^j n \in I_{\wH1}(\chi)$ if and only if $\chi(\alpha_1^{if} \alpha_2^{jf} \alpha_3^{ie}\alpha_4^{je}) = 1$. This is equivalent to saying that $\xi^{i_1if+i_2jf+i_3ie+i_4je} = 1.$

  We now consider various cases of irreducible characters of $N$. Note that in each of the cases discussed below, all the computations for $i$ and $j$ are done modulo $p$.

  \begin{itemize}
    \item \textbf{Case I:} Consider the characters $\chi$ of $N$ such that $i_1 = i_2 = i_3 = i_4 = 0$. These are exactly $p^3$ in number. Among these there are $p^2$ characters which act trivially on $z$ and hence on $\widetilde{H_{1}}'$. These give $p^4$ number of one dimensional characters of $\wH1$. The other $(p^3-p^2)$ character of $N$ of this case give irreducible character of $\wH1$ of dimension $p$ and therefore we obtain $p^4-p^3$ many characters of $\wH1$ of dimension $p$.
    \item \textbf{Case II:} When any three of $i_1, i_2, i_3, i_4$ are $0$ and the fourth one is non-zero, then $g = x^i y^j n \in I_{\wH1}(\chi)$ if and only if either $i$ or $j$ is $0$. Thus $|I_{\wH1}(\chi)| = p^8.$
          % For instance, when $i_1 = i_2 = i_3 = 0$ and $i_4 \neq 0,$ then we have $\zeta^{i_4je} = 1.$ Now $e = i_4^{-1}$ forces $j=0$ and $i$ can take any arbitrary value.

    \item \textbf{Case III:} When any two of $i_1, i_2, i_3, i_4$ are $0$ and the other two are non-zero, then $i$ is a non-zero multiple of $j$ or one of them is zero and other can take any value. Hence $|I_{\wH1}(\chi)| = p^8.$
          % For instance, when $i_1 = i_2 = 0$ and $i_3 = i_4 \neq 0,$ then we have $\zeta^{(i_3i+i_4j)e} = 1.$ Now $e = 1$ forces $i=\frac{-i_4}{i_3}j$ and $j$ can take any arbitrary value.
          % On the other hand, when $i_1 = i_3 = 0$ and $i_2 = i_4 \neq 0,$ then we have $\zeta^{(i_2f+i_4e)j} = 1.$ Now $f = 0$ forces $j=0$ and $i$ can take any arbitrary value.

    \item \textbf{Case IV:} When any three of $i_1, i_2, i_3, i_4$ are non-zero and the fourth one is $0$, then $i = j = 0$ and hence $I_{\wH1}(\chi) = N.$
          % For instance, when $i_1 = 0$ and $i_2, i_3, i_4$ are non-zero, then we have $\zeta^{i_2jf+(i_3i+i_4j)e} = 1.$ Now $f = 0$ and $e = 1$ force $i=\frac{-i_4}{i_3}j$. On the other hand, when $e = 0$ and $f = i_2^{-1},$ then we get $j=0.$ Thus $i=0.$

    \item \textbf{Case V:} Assume that each $i_t,$ where $t \in \{1, 2, 3, 4\},$ is non-zero. When $f = 0$ and $e = 1, i = \frac{-i_4j}{i_3};$ and when $e = 0$ and $f = 1, i = \frac{-i_2j}{i_1}.$ Therefore, $\frac{-i_4j}{i_3} =  \frac{-i_2j}{i_1},$ which holds if and only if $(i_4i_1-i_2i_3)j = 0.$ Now if $i_4i_1 \neq i_2i_3,$ then $i = j =0$ and hence $I_{\wH1}(\chi) = N.$
          %On the other hand, if $i_4 = \frac{i_2i_3}{i_1},$ then $|I_G(\chi)| = p^8.$ Furthermore, for $k = l = 1, \ (i_1+i_3)i = -(i_2+i_4)j.$ One can put together all of these conditions as:
          % For instance, when $i_1 = 0$ and $i_2, i_3, i_4$ are non-zero, then we have $\zeta^{i_2jf+(i_3i+i_4j)e} = 1.$ Now $f = 0$ and $e = 1$ force $i=\frac{-i_4}{i_3}j$. On the other hand, when $e = 0$ and $f = i_2^{-1},$ then we get $j=0.$ Thus $i=0.$
          %\begin{enumerate}[(a)]
          % \item when $i_1=-i_3$ and $i_2 \neq -i_4, I_G(\chi) = N.$ 
          % \item Similarly, when $i_2=-i_4$ and $i_1 \neq -i_3, I_G(\chi) = N.$ 
          % \item when $i_1=-i_3$ and $i_2 = -i_4, |I_G(\chi)| = p^8.$ 
          % \item when $i_1 \neq -i_3$ and $i_2 \neq -i_4,$ we have that $i = \frac{-(i_2+i_4)}{(i_1+i_3)}j.$ Thus $|I_G(\chi)| = p^8.$ 
          % \end{enumerate}
          % For instance, when $i_1 = 0$ and $i_2, i_3, i_4$ are non-zero, then we have $\zeta^{i_2jf+(i_3i+i_4j)e} = 1.$ Now $f = 0$ and $e = 1$ force $i=\frac{-i_4}{i_3}j$. On the other hand, when $e = 0$ and $f = i_2^{-1},$ then we get $j=0.$ Thus $i=0.$
          Here note that $(p-1)^3$ many characters of $N$ satisfy $i_4i_1 = i_2i_3$ and their inertia 
          group is of order $p^8.$ Thus the remaining $((p-1)^4-(p-1)^3)$ characters have inertia group $N.$

  \end{itemize}

  %Since $N$ is abelian, it has $p^7$ linear characters, out of which $(p^7-p^4+1)$ have $\widetilde{H_{2}}$ as the inertia group. 
  Considering the case of $(p^7-p^3)$ many characters of $N,$ discussed in Cases II-V,
  %$\widetilde{H_{1}}$ lying above 
  the inertia group of 
%  $p^3\Big({4\choose3}(p-1)^3+(p-1)^3(p-2)\Big) =$ 
  $ p^3(p-1)^3(p+2)$ characters is $N,$ and for the other 
%  $p^3 \Big({4\choose3}(p-1)+{4\choose2}(p-1)^2+(p-1)^3 \Big)$ 
$ p^3(p-1)(p^2+4p-1)$ characters, it is of order $p^8.$ By using Clifford theory, we obtain
  \[
    \mathbb C[\wH1] \cong \mathbb C^{\oplus p^4 } \oplus (\mathrm{M}_p(\mathbb C))^{\oplus (p^3+4p^2-p+1)p^2(p-1)} \oplus (\mathrm{M}_{p^2}(\mathbb C))^{\oplus p(p-1)^3(p+2)}.
  \]

  \noindent {\bf Representations of $\widetilde{H_{2}}$:} We have $\widetilde{H_{2}}' = \langle x, y, z, w, \alpha_{2} \rangle$ and $Z(\widetilde{H_{2}}) = \langle \alpha^p, x, y, z, w \rangle$. Clearly, there are exactly $p^{4}$ number of linear characters of $\widetilde{H_{2}}$. Consider the subgroup $$M = \langle \alpha_{2},\alpha_{3}, x, y, z, w, \alpha^p \rangle$$
  of $\wH2$. This is an abelian normal group of order $p^7$. Let $\chi \in \irr(M)$ such that $\chi(x)= \xi^{i_{1}}$, $\chi(y)= \xi^{i_{2}}$, $\chi(z)= \xi^{i_{3}}$ and $\chi(w)= \xi^{i_{4}},$ where $\xi$ is a primitive $p^{th}$ root of unity and $0 \leq i_{1},i_{2},i_{3},i_{4} \leq (p-1)$. Let $g = \alpha^{i} \alpha_1^{j}m$, where $m \in M,$ be an element of $I_{G}(\chi).$
  Therefore, for any $m^{'} = \alpha_2^{k}\alpha_3^{l}h \in M,$ where $h \in Z(\widetilde{H_1}),$ we have $\chi^{\alpha^{i} \alpha_1^{j}}(\alpha_2^{k}\alpha_3^{l}h) = \chi(\alpha_2^{k}\alpha_3^{l}h).$ Now similar computations yield that we must have $\xi^{i_1jk+i_2ik+i_3(-jl)+i_4(-il)} = 1$ and hence the following cases arise:
  \begin{itemize}
    \item \textbf{Case I:} When any three of $i_1, i_2, i_3, i_4$ are $0$ and the fourth one is non-zero, then either $i$ or $j$ is $0.$ Thus $|I_G(\chi)| = p^8.$
          % For instance, when $i_1 = i_2 = i_3 = 0$ and $i_4 \neq 0,$ then we have $\zeta^{i_4je} = 1.$ Now $e = i_4^{-1}$ forces $j=0$ and $i$ can take any arbitrary value.

    \item \textbf{Case II:} When any two of $i_1, i_2, i_3, i_4$ are $0$ and the other two are non-zero, then $i$ is a non-zero multiple of $j$ or one of them is zero and other can take any value. Hence $|I_G(\chi)| = p^8.$
          % For instance, when $i_1 = i_2 = 0$ and $i_3 = i_4 \neq 0,$ then we have $\zeta^{(i_3i+i_4j)e} = 1.$ Now $e = 1$ forces $i=\frac{-i_4}{i_3}j$ and $j$ can take any arbitrary value.
          % On the other hand, when $i_1 = i_3 = 0$ and $i_2 = i_4 \neq 0,$ then we have $\zeta^{(i_2f+i_4e)j} = 1.$ Now $f = 0$ forces $j=0$ and $i$ can take any arbitrary value.

    \item \textbf{Case III:} When any three of $i_1, i_2, i_3, i_4$ are non-zero and the fourth one is $0$, then $i = j = 0$ and hence $I_G(\chi) = M.$
          % For instance, when $i_1 = 0$ and $i_2, i_3, i_4$ are non-zero, then we have $\zeta^{i_2jf+(i_3i+i_4j)e} = 1.$ Now $f = 0$ and $e = 1$ force $i=\frac{-i_4}{i_3}j$. On the other hand, when $e = 0$ and $f = i_2^{-1},$ then we get $j=0.$ Thus $i=0.$

    \item \textbf{Case IV:} Assume that each $i_t,$ where $t \in \{1, 2, 3, 4\},$ is non-zero. When $l = 0$ and $k = 1, i = \frac{-i_1j}{i_2};$ and when $k = 0$ and $l = 1, i = \frac{-i_3j}{i_4}.$ Therefore, $\frac{-i_3j}{i_4} =  \frac{-i_1j}{i_2},$ which holds if and only if $(i_4i_1-i_2i_3)j = 0.$ Now if $i_4i_1 \neq i_2i_3,$ then $i = j =0$ and hence $I_G(\chi) = M.$ On the other hand, if $i_4 = \frac{i_2i_3}{i_1},$ then $|I_G(\chi)| = p^8.$
          % Furthermore, when $k = l = 1, (i_4-i_2)i = (i_1-i_3)j.$ It follows that
          % \begin{enumerate}[(a)]
          % \item when $i_1=i_3$ and $i_2 \neq i_4, I_G(\chi) = M.$ 
          % \item Similarly, when $i_2=i_4$ and $i_1 \neq i_3, I_G(\chi) = M.$ 
          % \item when $i_1=i_3$ and $i_2 = i_4, |I_G(\chi)| = p^8.$ 
          % \item when $i_1 \neq i_3$ and $i_2 \neq i_4,$ then since $i = \frac{(i_1-i_3)}{(i_4-i_2)}j,$ we obtain that $|I_G(\chi)| = p^8.$ 
          % \end{enumerate}
          % For instance, when $i_1 = 0$ and $i_2, i_3, i_4$ are non-zero, then we have $\zeta^{i_2jf+(i_3i+i_4j)e} = 1.$ Now $f = 0$ and $e = 1$ force $i=\frac{-i_4}{i_3}j$. On the other hand, when $e = 0$ and $f = i_2^{-1},$ then we get $j=0.$ Thus $i=0.$
          Therefore, in this case the inertia group of $(p-1)^3$ many characters is of order $p^8$ and for the remaining $((p-1)^4-(p-1)^3)$ characters it is $M.$
  \end{itemize}

  Considering all the above cases along with the Clifford theory gives
  \[
    \mathbb C[\wH2] \cong \mathbb C^{\oplus p^4 } \oplus (\mathrm{M}_p(\mathbb C))^{\oplus (p^3+4p^2-p+1)p^2(p-1)} \oplus (\mathrm{M}_{p^2}(\mathbb C))^{\oplus p(p-1)^3(p+2)}.
  \]
  The required isomorphism is also obtained from the above construction.
\end{proof}

% Changed the notations of $G_{1}$ and $G_{2}$ (correspondingly, $\widetilde{G_{1}}$ and $\widetilde{G_{2}}$) to avoid confusions!

% the groups $H_{1}$ and $H_{2}$ are $\mathbb{C}$-twist isomorphic. Hence, we have the following:
\begin{proposition} \label{finalprop}
  The groups $H_1$ and $H_2$
  % $\phi_{2}(2111)c$ and $\phi_{2}(2111)d$ 
  are $\mathbb C$-twist isomorphic.
\end{proposition}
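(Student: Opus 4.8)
The plan is to deduce this proposition directly from Lemma~\ref{l1}, feeding it the representation groups $\wH1$ and $\wH2$ produced in Lemma~\ref{l2} together with the representation-theoretic data assembled in Proposition~\ref{p1}. Concretely, I would apply Lemma~\ref{l1} with $G_1 = H_1$ and $G_2 = H_2$, taking the central subgroups to be $A_1 = A = \langle \alpha_1, \alpha_2, \alpha_3, \alpha_4 \rangle \leq \wH1$ and $A_2 = B = \langle x, y, z, w \rangle \leq \wH2$. The remaining work is then just to check that these choices satisfy the hypotheses of that lemma, since its conclusion is exactly the assertion $H_1 \sim_{\mathbb C} H_2$.

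First I would confirm that $A$ and $B$ are central and realize the correct quotients. From the presentation of $\wH1$ its center is $\langle \alpha_1, \alpha_2, \alpha_3, \alpha_4, \alpha^p \rangle$, so $A \leq Z(\wH1)$; similarly $Z(\wH2) = \langle \alpha^p, x, y, z, w \rangle$ contains $B$. Next I would identify $A$ and $B$ as the kernels $K_1, K_2$ of the natural projections $\wH{i} \to H_i$ already used in the proof of Lemma~\ref{l2} (the map sending $x,y,z,\alpha$ to themselves and every $\alpha_i$ to $1$ on $\wH1$, and the analogous map on $\wH2$ sending $x,y,z,w$ to $1$). This yields $\wH{i}/A_i \cong H_i$ at once, and since $|A_i| = |K_i| = |\Ho^2(H_i, \mathbb C^\times)| = p^4$, each $A_i$ has the right order.

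Because each $\wH{i}$ is a representation group of $H_i$, the standard theory of representation groups (see \cite[Chapter~3]{MR788161}) guarantees that the transgression map $\mathrm{tra}_i : \mathrm{Hom}(A_i, \mathbb C^\times) \to \Ho^2(H_i, \mathbb C^\times)$ is an isomorphism, so that hypothesis of Lemma~\ref{l1} is met. Proposition~\ref{p1} then supplies precisely the last ingredient: an isomorphism $\sigma : \widehat{A} \to \widehat{B}$, that is $\sigma : \mathrm{Hom}(A, \mathbb C^\times) \to \mathrm{Hom}(B, \mathbb C^\times)$, under which $\mathrm{Irr}(\wH1 \mid \chi) \leftrightarrow \mathrm{Irr}(\wH2 \mid \sigma(\chi))$ is a dimension-preserving bijection for every $\chi \in \widehat{A}$. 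With all hypotheses verified, applying Lemma~\ref{l1} gives $H_1 \sim_{\mathbb C} H_2$.

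At this stage the argument is essentially bookkeeping: the genuine content—computing the inertia groups, the Clifford-theoretic decompositions of $\mathbb C[\wH1]$ and $\mathbb C[\wH2]$, and matching them character-by-character—was already carried out in Proposition~\ref{p1}. The only point that demands a little care is confirming that the transgression maps are isomorphisms \emph{onto the full Schur multiplier}, which is exactly where being a representation group (rather than an arbitrary central extension) is used; this is a standard consequence of the defining containment $A_i \subseteq Z(\wH{i}) \cap [\wH{i}, \wH{i}]$ together with the equality $|A_i| = |\Ho^2(H_i, \mathbb C^\times)|$ noted in Lemma~\ref{l2}. Consequently I do not anticipate any substantive obstacle in this concluding step.
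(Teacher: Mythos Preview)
Your proposal is correct and follows exactly the paper's approach: the paper's proof is the single sentence ``This follows from Lemma~\ref{l1}, Lemma~\ref{l2} and Proposition~\ref{p1},'' and your write-up simply unpacks why the hypotheses of Lemma~\ref{l1} are met by the data supplied in Lemma~\ref{l2} and Proposition~\ref{p1}.
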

\begin{proof}
	This follows from Lemma \ref{l1}, Lemma \ref{l2} and Proposition \ref{p1}. 
\end{proof}

%\footnote{GK:complete the proof}
\begin{lemma} \label{lastlem}
	$\mathbb C[\phi_{3}(1^{5})] \ncong \mathbb C[\phi_{7}(1^{5})]$. 
\end{lemma}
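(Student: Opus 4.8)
The plan is to distinguish the two complex group algebras through their Wedderburn decompositions, and in particular through the maximal dimension of an irreducible representation, which is an invariant of the group algebra. I will show that every irreducible representation of $\phi_3(1^5)$ has dimension at most $p$, whereas $\phi_7(1^5)$ admits an irreducible representation of dimension $p^2$. This forces a matrix block $\mathrm{M}_{p^2}(\mathbb C)$ to appear in $\mathbb C[\phi_7(1^5)]$ but not in $\mathbb C[\phi_3(1^5)]$, so the two algebras cannot be isomorphic.

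First I would compute the centers of the two groups from their presentations in Theorem~\ref{sumana}. Since $\phi_3(1^5)=\phi_3(1^4)\times C_p$ and $\phi_3(1^4)$ is a maximal-class group of order $p^4$, its center is $\langle\alpha_3\rangle$, and hence $Z(\phi_3(1^5))=\langle\alpha_3\rangle\times C_p$ has order $p^2$. A direct check using the relations $[\alpha_1,\alpha]=\alpha_2$, $[\alpha_2,\alpha]=\alpha_3$ and $[\alpha_1,\beta]=\alpha_3$ shows that the only central elements of $\phi_7(1^5)$ are powers of $\alpha_3$, so $Z(\phi_7(1^5))=\langle\alpha_3\rangle$ has order $p$.

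For $\phi_3(1^5)$, the standard bound $\chi(1)^2\le |G:Z(G)|$ for an irreducible character $\chi$ gives $\chi(1)^2\le p^5/p^2=p^3$, whence $\chi(1)\le p$; thus $\mathbb C[\phi_3(1^5)]$ has no block larger than $\mathrm{M}_p(\mathbb C)$. For $\phi_7(1^5)$ I would produce the desired $p^2$-dimensional representation by Clifford theory. Set $A=\langle\alpha_2,\alpha_3,\beta\rangle$, which the relations show to be an abelian normal subgroup of order $p^3$ with $G/A\cong\langle\bar\alpha,\bar\alpha_1\rangle\cong C_p\times C_p$. Writing a linear character $\lambda\in\widehat A$ as $\lambda(\alpha_2)=\xi^{a}$, $\lambda(\alpha_3)=\xi^{b}$, $\lambda(\beta)=\xi^{c}$ with $\xi$ a primitive $p$-th root of unity, the conjugation action computed from $\alpha^{-1}\alpha_2\alpha=\alpha_2\alpha_3$ and $\alpha_1^{-1}\beta\alpha_1=\beta\alpha_3^{-1}$ sends $a\mapsto a+b$ under $\alpha$ and $c\mapsto c-b$ under $\alpha_1$, while fixing $b$. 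Hence whenever $b\neq 0$ the orbit of $\lambda$ has full size $p^2$, so $I_G(\lambda)=A$, and $\mathrm{Ind}_A^G\lambda$ is irreducible of dimension $|G:A|=p^2$ by Theorem~\ref{thm:clifford}(a). This exhibits a block $\mathrm{M}_{p^2}(\mathbb C)$ in $\mathbb C[\phi_7(1^5)]$ and completes the argument.

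The main obstacle is the Clifford-theoretic step for $\phi_7(1^5)$: one must verify that $A$ is genuinely abelian and normal, set up the conjugation action on $\widehat A$ with the correct commutator conventions, and confirm that the stabilizer collapses to $A$ precisely for the characters with $b\neq0$. The center computation for $\phi_7(1^5)$, although elementary, likewise needs care, since every generator relation must be invoked to rule out spurious central elements.
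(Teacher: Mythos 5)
Your proof is correct and follows essentially the same route as the paper: both arguments distinguish the two algebras by showing every irreducible representation of $\phi_3(1^5)$ has degree at most $p$ while $\phi_7(1^5)$ has an irreducible of degree $p^2$, the latter produced by inducing a linear character (nontrivial on $\alpha_3$) from an abelian normal subgroup of index $p^2$ whose inertia group collapses to that subgroup. The only differences are cosmetic: the paper bounds the degrees for $\phi_3(1^5)$ using the abelian normal subgroup $\langle\alpha_1,\alpha_2,\alpha_3,\beta\rangle$ of index $p$ rather than the bound $\chi(1)^2\le |G:Z(G)|$, and for $\phi_7(1^5)$ it works with $\langle\alpha_1,\alpha_2,\alpha_3\rangle$ instead of your $\langle\alpha_2,\alpha_3,\beta\rangle$ (your choice in fact makes the stabilizer computation slightly cleaner, since it sidesteps the class-3 commutator identities with the $\binom{i}{2}$ terms).
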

\begin{proof}
It follows from the presentations of $\phi_{3}(1^{5})$ and $\phi_{7}(1^{5})$ that the nilpotency class of both the groups is $3.$ Now, consider the abelian normal subgroup $N_1 = \langle \alpha_1, \alpha_2, \alpha_3, \beta \rangle$ of $\phi_3(1^5).$ Since it is of index $p,$ each irreducible representation of $\phi_3(1^5)$ is of dimension at most $p.$ 
	\par Now note that the derived subgroup of $\phi_{7}(1^5)$ is $\langle \alpha_2 \rangle \times \langle \alpha_3 \rangle$ and its center is $\langle \alpha_3 \rangle$. Consider the abelian normal subgroup $N_2 = \langle \alpha_1, \alpha_2, \alpha_3 \rangle$ of $\phi_{7}(1^5)$. Let $\chi \in \irr(N_2)$ such that $\chi(\alpha_1) = \zeta^{i_1}, \chi(\alpha_2) = \zeta^{i_2}$ and $\chi(\alpha_3) = \zeta^{i_3}$, where $\zeta$ is a primitive $p$-th root of unity and $0 \leq i_1, i_2, i_3 \leq (p-1).$ Assume that for some $0 \leq i, j \leq p-1,$ $\alpha^i \beta^j$ stabilizes $\chi.$ Let $\alpha_{1}^{a}\alpha_{2}^{b}\alpha_{3}^{c}\in N_{2}$. 
	%$ \in I_{H_{(vii)}}^{\chi},$
	Since the group $\phi_{7}(1^5)$ is of nilpotency class $3$, \begin{eqnarray*}
		\alpha^i (\beta^j \alpha_1^a) \alpha_2^b \alpha_3^c \beta^{-j} \alpha^{-i} & = &
		\alpha^i \alpha_1^a \beta^j \alpha_2^b \beta^{-j} \alpha^{-i} \alpha_3^{c-aj} \\
		& = & \alpha_2^{-ai} \alpha_1^a \alpha^i \beta^j \alpha_2^b \beta^{-j} \alpha^{-i} \alpha_3^{c-aj-a\binom{i}{2}} \\
		& = & \alpha_1^a \alpha_2^{b-ai} \alpha_3^{-ib+c-aj-a\binom{i}{2}} =\alpha_{1}^{a}\alpha_{2}^{b}\alpha_{3}^{c}\alpha_{2}^{-ai}\alpha_{3}^{-ib-aj-a\binom{i}{2}}. 
	\end{eqnarray*}
	
	%\begin{eqnarray*}
	%    \chi^{\alpha^i \beta^j} (\alpha_1^a \alpha_2^b \alpha_3^c) & = &
	%   \chi( (\alpha^i \beta^j) \alpha_1^a \alpha_2^b \alpha_3^c (\alpha^i \beta^j)^{-1}) = \chi(\alpha^i \alpha_1^a \beta^j \alpha_2^b \beta^{-j} \alpha^{-i} \alpha_3^{c-aj}) \\
	%  & = & \chi(\alpha_2^{-ai} \alpha_1^a \alpha^i \beta^j \alpha_2^b \beta^{-j} \alpha^{-i} \alpha_3^{c-aj-a\binom{i}{2}}) \\
	% & = & \chi(\alpha_1^a \alpha_2^{b-ai} \alpha_3^{-ib+c-aj-a\binom{i}{2}}) = \chi(\alpha_1^a \alpha_2^b \alpha_3^c).
	%\end{eqnarray*}
	
	% Thus $\zeta^{ai_1+(b-ai)i_2 + (-ib+c-aj-a\binom{i}{2})i_3} =  \zeta^{ai_1+bi_2+ci_3}$ and 
	Thus, $\alpha^{i}\beta^{j}$ stabilizes $\chi$ if, and only if, $\zeta^{-aii_2-(ib+aj+a\binom{i}{2})i_3} = 1.$ When $i_2 = 0$ and $i_3 \neq 0,$ then for $a=0$ and $b = \frac{-1}{i_3},$ we have $\zeta^{i}=1;$ which implies that $i=0.$ Further, $a=\frac{-1}{i_3}$ (note that $i_{3}$ is invertible modulo $p$) gives $\zeta^{j} =1$. Hence $j=0$ and it follows that the inertia group of $\chi$ in $\phi_{7}(1^{5})$ is $N_2.$ Therefore, the character of $\phi_7(1^5)$ induced from $\chi$ is irreducible of degree $p^2.$ Since $\phi_3(1^5)$ has no irreducible representations of dimension $p^2,$ the complex group algebras of $\phi_3(1^5)$ and $\phi_7(1^5)$ are not isomorphic. 
\end{proof}

\begin{proof}[{\bf Proof of Theorem~\ref{thm:s(G)=3}}] From Theorem~\ref{sumana}, it is clear that every $p$-group with $s(G) = 3$ has order $p^n$ where $n \in \{4, \ldots,  7\}$.
  %For any fixed $p$, there exists only one $p$ group of order $p^3$ with $s(G) = 3$, hence this form a singleton $\mathbb C$-twist class. 
  In the following, we separately consider the cases of $n$ with $4 \leq n \leq 7$:

  \subsection*{n=4} For an odd prime $p,$ the only groups of order $p^4$ whose generalized corank is $3$ are $\phi_2(22), \phi_3(211)a, \phi_3(211)b_1$ and $\phi_3(211)b_{r_p}.$ It follows from the presentation that the derived subgroup of the group $\phi_2(22)$ is $\langle [\alpha_1, \alpha] \rangle,$ which is of order $p;$
  %Since $z_1 = (\alpha_1, \alpha) = \alpha^p = \alpha_2$ commutes with all the generators, $\langle z_1 \rangle$ forms a central subgroup of $G^{'}$. Further, since $(\alpha_1^p, \alpha) = 1, Z(G) = \langle \alpha_1^p, \alpha_2 \rangle$ is of order $p^2.$ Thus $z_1$ is the only non-trivial commutator and hence $G^{'} = \langle z_1 \rangle \subseteq Z(G).$
  % and so $H_{(i)}$ is of nilpotency class 2.
  whereas for the rest of the three groups, the derived subgroup is of order $p^2.$ Therefore, each of these groups has $p^2$ and $(p^2-1)$ irreducible characters of degree $1$ and $p,$ respectively, and hence their complex group algebras are isomorphic. Since these groups are not of central type, it follows from Lemma \ref{centraltype} that they lie in the same $\mathbb{C}$-twist isomorphism class.

  On the other hand, when $p = 2$; the only two $2$-groups with $s(G) = 3$ that correspond to the case of $n=4$ are $C_{4} \rtimes C_{4}$ and $D_{16}.$
  %can be checked in GAP (refer to the Appendix section) that the complex group algebras of these groups are not isomorphic 
  Since the complex group algebras of these two groups are not isomorphic, each of these groups constitutes a singleton $\mathbb{C}$-twist isomorphism class.

  \subsection*{n=5}
  When $p$ is an odd prime, it follows from Theorem \ref{sumana} that the only groups of order $p^5$ with $s(G) = 3$ are $\phi_3(1^5)$, $\phi_7(1^5), H_1$ and $H_2$.  Consequently, the result follows from Proposition \ref{finalprop}. The derived subgroup of the groups $\phi_3(1^5)$ and $\phi_7(1^5)$ is elementary abelian of order $p^2$ and of $H_1$ and $H_2$ is of order $p.$ Thus the groups $\phi_3(1^5)$ and $\phi_7(1^5)$ have $p^3$ linear characters; where as $H_1$ and $H_2$ have $p^4$ linear characetrs. Therefore, no group in the set $\{\phi_3(1^5), \phi_7(1^5)\}$ can be $\mathbb{C}$-twist isomorphic to any group in $\{H_1, H_2\}.$ Now it follows from Lemma \ref{lastlem} and Proposition \ref{finalprop} that in this case the only non-singleton $\mathbb{C}$-twist isomorphism class is constituted by $H_1$ and $H_2.$

  For $p = 2$, the only two $2$-groups of order $32$ that have generalized corank $3$ are $C_{2}^{4} \rtimes C_{2}$ and $C_{2} \times ((C_{4}\times C_{2}) \rtimes C_{2})$. It can be checked using GAP~\cite{GAP4} that the size of the derived subgroup of the first group is $4$ and that of the second is $2.$ Thus the complex group algebras of these groups are not isomorphic, which establishes the desired result.
  %and hence each of these groups constitutes a singleton $\mathbb{C}$-twist isomorphism class.

  \subsection*{n=6} The groups of order $p^6$ with $s(G) = 3$ are $\phi_{11}(1^6), \phi_{12}(1^6), \phi_{13}(1^6)$ and $\phi_{15}(1^6).$ Note that the size of the commutator subgroup of $\phi_{11}(1^6)$ is $p^3$ and of the rest of the groups is $p^2.$
  %Thus $H_{(viii)}$ forms a singleton $\mathbb{C}$ twist iso class. 

  It can be checked that $\phi_{12}(1^6) = ES_p(p^3) \times ES_p(p^3)$ has $2p^2(p-1)$ inequivalent irreducible representations of dimension $p$ and $(p-1)^2$ of dimension $p^2.$ Further, it follows from \cite[Table~4.1]{james} that $\phi_{13}(1^6)$ has $(p^3-p^2)$ representations of dimension $p$; whereas $\phi_{15}(1^6)$ has no representation of dimension $p$. Thus each group of order $p^6$ with generalized corank 3 constitutes a singleton $\mathbb{C}$-twist isomorphism class.

  \subsection*{n=7} Any group of order $p^7$ with $s(G) = 3$ is isomorphic to $C_{p}^{(4)} \rtimes C_{p} \times C_{p}^{2}$. Hence such a group forms a singleton $\mathbb C$-twist isomorphism class.
\end{proof}

% We denote the $j$-th group of the above list by $H_{(j)}.$ Note that the only $2$-groups with $s(G) = 3$ are $H_{(15)}$ and $H_{(16)}.$ Since the size of their commutator subgroup is $2$ and $4,$ respectively, their complex group algebras are not isomorphic.

% It follows from Theorem * that in the partition of $\Omega_{16}$ to $\sim_{\mathbb{C}}$-equivalence classes, all groups are singletons except for two size two equivalence classes. Clearly, one of those classes is constituted by $\{ G_{16,(i)}, G_{16,(iii)} \}.$ Since $s(G)$ can be at most $4,$ it follows from our analysis for $s(G) = 3$ that the other $\mathbb{C}$ twist iso class is constituted by some groups with $s(G) = 4.$ Now the only non-abelian groups of order 16 which are not covered in the cases $s(G) = 0, 1, 2$ and $3$ are the dihedral group, semi-dihedral group $(QD16)$ and generalized
% dihedral group $(Q16)$. Since the size of their commutator subgroup is $2, 4$ and $4,$ respectively, the other size two equivalence class is constituted by $QD16$ and $Q16$.

% \begin{remark}\label{r2}
% The only non-singleton $\mathbb{C}$-twist iso classes of $\Omega_{16}$ are $\{ G_{16,(i)}, G_{16,(iii)} \}$ and $\{QD16, Q16 \}$.
% \end{remark}

\subsection*{Acknowledgement} GK acknowledges the research support of the National Board for Higher Mathematics, Department of Atomic Energy, Govt. of India (0204/16(7)/2022/R\&D-II/ 11978). SK acknowledges the support of the Council of Scientific and
Industrial Research (CSIR), India (09/092(1066)/2020-EMR-I). PS thanks the support of SERB power grant(SPG/2022/001099). 
	
\subsection*{Declaration of interests} The authors declare that they have no known competing financial interests or personal relationships that could have appeared to influence the work reported in this paper.

\bibliographystyle{amsplain}
\bibliography{TP}
\end{document}